\newtheorem{theorem}{Theorem}[section]
\newtheorem{lemma}[theorem]{Lemma}
\newtheorem{proposition}[theorem]{Proposition}
\theoremstyle{definition}
\newtheorem{definition}[theorem]{Definition}
\newtheorem{example}[theorem]{Example}
\newtheorem{remark}[theorem]{Remark}
\newtheorem*{ack}{Acknowledgement}
\numberwithin{equation}{section}
\DeclareMathOperator{\supp}{supp}
\DeclareMathOperator{\diam}{diam}
\begin{document}

%%%%% To ease editing, for IMPAN journals add:

% \baselineskip=17pt

%%%%%%%%%%%

%% In the running head, replace first names by initials
%% and give an abbreviation of the title.

\title[The stability of Markov operators]{On some results on the stability
of Markov operators}

\author[S. W\c{e}drychowicz]{Stanis\l aw W\c{e}drychowicz}
\address{Department of Mathematics, Rzesz\'{o}w University of Technology,
Al. Powsta\'{n}c\'{o}w Warszawy 12, 35-959 Rzesz\'{o}w, Poland}
\email{swedrych@prz.edu.pl}

\author[A. Wi\'{s}nicki]{Andrzej Wi\'{s}nicki}
\address{Department of Mathematics, Rzesz\'{o}w University of Technology,
Al. Powsta\'{n}c\'{o}w Warszawy 12, 35-959 Rzesz\'{o}w, Poland}
\email{awisnicki@prz.edu.pl}

\date{}

\begin{abstract}
We formulate a criterion for the existence of an invariant measure for a
Feller semigroup defined on a metric space with the e-property for bounded
continuous functions and use it to prove the asymptotic stability of a
semigroup satisfying a lower bound condition. Our results complement those
of A. Lasota and J. A. Yorke in proper metric spaces and of T. Szarek in
Polish spaces.
\end{abstract}

\subjclass[2010]{Primary 60J25; Secondary 37A30, 47D07, 60J05, 60J35.}

\keywords{Markov operator, Feller semigroup, e-property, invariant measure,
asymptotic stability, iterated function system.}

\maketitle

\section{Introduction}

This paper is\ motivated by the study of asymptotic stability of Markov
operators and the work of T. Szarek initiated in \cite{Sz1}. He managed to
extend the Lasota-Yorke lower bound technique \cite{LaYo} from proper metric
spaces to Polish spaces. The crucial difficulty is to establish the
existence of an invariant measure. If bounded sets are relatively compact
then it follows from the Riesz representation theorem and the result of S.
R. Foguel \cite{Fo}. However in Polish spaces, Szarek's ingenious proof,
based on the concept of tightness, is quite a delicate matter.

We show that it is possible to develop the lower bound technique in any
metric space. Our main new tool is the weak sequential completeness of the
weak topology (i.e., weak$^{\ast }$ topology restricted to countably
additive probability measures). Inspired by the Lasota-York theorem \cite[%
Theorem 4.1]{LaYo} and refining its proof, we are able to formulate a
criterion for the existence of an invariant measure for a Markov-Feller
semigroup with the e-property for bounded continuous functions, and prove it
by extracting a Cauchy sequence which, due to sequential completeness of the
weak topology, leads to the existence of an invariant measure. Notice that
in Polish spaces, our criterion is weaker than Szarek's one (see \cite[%
Proposition 2.1]{Sz4}, \cite[Theorem 3.1]{LaSz}) and its strengthening in
\cite[Proposition 2]{BKS}, yet strong enough to prove the stability results
in the spirit of \cite[Theorem 3.3]{Sz1} which extend \cite[Theorem 3.3]{Sz4}%
, \cite[Corollary 5.4]{SzWo} and \cite[Theorem 3.3]{CzHo}.\ It seems that
the general case of metric spaces has not been carefully studied yet, though
there are some natural spaces which are nonseparable or noncomplete. For
example, the Skorokhod space $D[0,1]$ with the uniform topology is not
separable, and equipped with the metric%
\begin{equation*}
d(x,y)=\inf_{\lambda \in \Lambda }(\sup_{t\in \lbrack 0,1]}\left\vert
\lambda (t)-t\right\vert +\sup_{t\in \lbrack 0,1]}\left\vert x(\lambda
(t))-y(t)\right\vert ),
\end{equation*}%
is not complete.

The paper is organized as follows. Section 2 contains some notation from the
theory of Markov semigroups. In Section 3 we prove a criterion for the
existence of an invariant measure for a Feller semigroup, which is used in
Section 4 to prove the stability results. An application to iterated
function systems and jump processes, similar to \cite{CzHo, Sz4}, is also
given.

\section{Markov operators}

Let $(X,\rho )$ be a metric space. We write $B(x,r)$ for the open ball with
a centre $x$ and radius $r$, $\mathcal{B}\left( X\right) $ for the space of
Borel subsets of $X$ and $B_{b}(X)$ for the space of (real) bounded Borel
measurable functions with the supremum norm $\left\Vert \cdot \right\Vert
_{\infty }$. Let $C_{b}(X)$ denote its subspace of bounded continuous
functions and $\mathrm{Lip}_{b}(X)$ the subspace of bounded Lipschitz
functions. Let $\mathcal{M}_{1}(X)$ (or $\mathcal{M}_{1}$ for short) be the
space of Borel probability measures on $X$. For $\varphi \in B_{b}(X)$ and $%
\mu \in \mathcal{M}_{1}$ we use the notation $\left\langle \varphi ,\mu
\right\rangle =\int_{X}\varphi (x)\mu (dx).$

Let $(P_{t})_{t\geq 0}$ be a Markov semigroup defined on $B_{b}(X)$. Thus $%
P_{t}\mathbf{1}_{X}=\mathbf{1}_{X}$ for each $t\geq 0$ and $P_{t}\varphi
\geq 0$ if $\varphi \geq 0$. Throughout this paper we shall assume that the
semigroup $(P_{t})_{t\geq 0}$ is Feller, i.e., $P_{t}(C_{b}(X))\subset
C_{b}(X)$ and there exists a semigroup $P_{t}^{\ast }:\mathcal{M}%
_{1}\rightarrow \mathcal{M}_{1},t\geq 0,$ dual to $(P_{t})_{t\geq 0},$ i.e.,
$\left\langle P_{t}\varphi ,\mu \right\rangle =\left\langle \varphi
,P_{t}^{\ast }\mu \right\rangle $ for every $\varphi \in B_{b}(X)$, $\mu \in
\mathcal{M}_{1}$ and $t\geq 0.$ We shall also assume that $(P_{t})_{t\geq 0}$
is stochastically continuous, i.e., $\lim_{t\rightarrow 0^{+}}P_{t}\psi
(x)=\psi (x)$ for $\psi \in C_{b}(X)$ and $x\in X.$

Recall that $\mu _{\ast }\in \mathcal{M}_{1}$ is invariant for the semigroup
$(P_{t})_{t\geq 0}$ if $P_{t}^{\ast }\mu _{\ast }=\mu _{\ast }$ for all $%
t\geq 0.$ For a given $t>0$ and $\mu \in \mathcal{M}_{1}$, define $Q_{t}\mu
=t^{-1}\int_{0}^{t}P_{s}^{\ast }\mu \ ds.$ We write $Q_{t}(x,\cdot
):=Q_{t}\delta _{x}.$ A semigroup $(P_{t})_{t\geq 0}$ is said to be
asymptotically stable if there exists a unique invariant measure $\mu _{\ast
}\in \mathcal{M}_{1}$ such that $(P_{t}^{\ast }\mu )_{t\geq 0}$ converges
weakly to $\mu _{\ast }$ as $t\rightarrow \infty $ for every $\mu \in
\mathcal{M}_{1}$.

\begin{definition}
We say that a semigroup $(P_{t})_{t\geq 0}$ has the e-property if for any
bounded and Lipschitz function $\varphi $ the family of functions $%
(P_{t}\varphi )_{t\geq 0}$ is equicontinuous at every point $x$ of $X$ ,
i.e., for any $\varphi \in \mathrm{Lip}_{b}(X)$ and $x\in X$ we have%
\begin{equation*}
\lim_{y\rightarrow x}\sup_{t\geq 0}\left\vert P_{t}\varphi (y)-P_{t}\varphi
(x)\right\vert =0.
\end{equation*}
\end{definition}

The e-property was introduced by Szarek as a counterpart of the e-chain
property defined in \cite[p.144]{MeTw}. As we work in a general metric
space, we will also consider a little stronger, e-property for bounded
continuous functions (see also \cite{Cz,CzHo}). For example, the semigroup%
\begin{equation*}
P_{t}\varphi (x)=\varphi (x+t),\ \varphi \in B_{b}(\mathbb{R}),
\end{equation*}%
has the e-property but there exists a bounded continuous function $\psi :%
\mathbb{R}\rightarrow \mathbb{R}$ such that $(P_{t}\psi )_{t\geq 0}$ is not
equicontinuous at any point $z\in \mathbb{R}$.

\section{Invariant measures}

The following criterion of `weak convergence' is basic for our approach to
construct an invariant measure, see \cite[Theorem 8.7.1]{Bo}, \cite[Theorem 1%
]{Du1}.

\begin{theorem}
\label{complete}Let $X$ be a metric space. The weak convergence on $\mathcal{%
M}_{1}(X)$ is weak sequentially complete, i.e., if a sequence of probablity
measures $(\mu _{n})$ satisfies $\lim_{n,m\rightarrow \infty }\left\vert
\left\langle \varphi ,\mu _{n}\right\rangle -\left\langle \varphi ,\mu
_{m}\right\rangle \right\vert =0$ for every $\varphi \in C_{b}(X),$ then it
converges weakly to some $\mu \in \mathcal{M}_{1}(X).$
\end{theorem}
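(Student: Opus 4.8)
The plan is to show that the pointwise limit $L(\varphi):=\lim_{n}\langle\varphi,\mu_{n}\rangle$, which exists for every $\varphi\in C_{b}(X)$ by the Cauchy hypothesis, is represented by integration against a Borel probability measure; the weak convergence $\mu_{n}\to\mu$ is then immediate. First I would record the elementary properties of $L$: it is linear, positive (if $\varphi\ge0$ then each $\langle\varphi,\mu_{n}\rangle\ge0$), normalized ($L(\mathbf{1}_{X})=\lim_{n}\mu_{n}(X)=1$), and satisfies $|L(\varphi)|\le\left\Vert \varphi\right\Vert _{\infty}$. Since $C_{b}(X)$ is a Stone vector lattice containing the constants, the Daniell--Stone theorem will produce a countably additive measure $\mu$ on the $\sigma$-algebra generated by $C_{b}(X)$ with $L(\varphi)=\int_{X}\varphi\,d\mu$, provided $L$ satisfies the Daniell ($\sigma$-smoothness) condition. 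Here I would use that $X$ is metric: every closed set $F$ is the zero set of the continuous function $x\mapsto\rho(x,F)$, so the Baire $\sigma$-algebra generated by $C_{b}(X)$ coincides with the Borel $\sigma$-algebra $\mathcal{B}(X)$, and the resulting $\mu$ is a genuine Borel measure with $\mu(X)=L(\mathbf{1}_{X})=1$, i.e. $\mu\in\mathcal{M}_{1}(X)$.

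The heart of the argument, and the only real obstacle, is verifying $\sigma$-smoothness: for every sequence $f_{n}\in C_{b}(X)$ with $0\le f_{n}\le1$ and $f_{n}\downarrow0$ pointwise, one must show $L(f_{n})\to0$. The naive attempt fails, because the raw double-sequence data---$\langle f_{n},\mu_{m}\rangle\to L(f_{n})$ as $m\to\infty$ for each $n$, and $\langle f_{n},\mu_{m}\rangle\to0$ as $n\to\infty$ for each fixed $m$---does not force the iterated limit $\lim_{n}L(f_{n})$ to vanish, since the two orders of limit need not agree. The resolution exploits two facts simultaneously. The second assertion above, $\langle f_{n},\mu_{m}\rangle\to0$ as $n\to\infty$ for fixed $m$, is exactly the countable additivity of the individual measure $\mu_{m}$ (monotone convergence, $f_{n}\downarrow0$ and $f_{n}\le1\in L^{1}(\mu_{m})$). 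The extra leverage is that the Cauchy hypothesis holds for \emph{all} $\varphi\in C_{b}(X)$, so I am free to test against a cleverly assembled function.

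Concretely, suppose toward a contradiction that $L(f_{n})=b_{n}$ does not tend to $0$. Since $b_{n}$ is nonincreasing and nonnegative, $b_{n}\downarrow b$ for some $b>0$. I fix a small $\delta>0$ and an index $N$ with $b\le b_{n}<b+\delta$ for $n\ge N$. Using $\langle f_{N},\mu_{m}\rangle\to b_{N}$ I pick $m_{1}$, as large as desired, with $\langle f_{N},\mu_{m_{1}}\rangle>b-\delta$; then, holding $m_{1}$ fixed and using $\langle f_{n},\mu_{m_{1}}\rangle\to0$, I choose $n_{2}>N$ with $\langle f_{n_{2}},\mu_{m_{1}}\rangle<\delta$. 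The function $g:=f_{N}-f_{n_{2}}\in C_{b}(X)$ satisfies $0\le g\le1$ and $\langle g,\mu_{m_{1}}\rangle>b-2\delta$, whereas $L(g)=b_{N}-b_{n_{2}}\in(-\delta,\delta)$, so $\langle g,\mu_{m}\rangle<2\delta$ for all large $m$; choosing $m_{2}>m_{1}$ accordingly gives $|\langle g,\mu_{m_{1}}\rangle-\langle g,\mu_{m_{2}}\rangle|>b-4\delta$. Taking $\delta<b/8$ produces two arbitrarily large indices at which $\langle g,\mu_{m}\rangle$ differs by more than $b/2$, contradicting that $(\langle g,\mu_{m}\rangle)_{m}$ is Cauchy. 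Hence $b=0$, the functional $L$ is $\sigma$-smooth, and Daniell--Stone yields the desired $\mu\in\mathcal{M}_{1}(X)$ with $\langle\varphi,\mu_{n}\rangle\to\langle\varphi,\mu\rangle$ for every $\varphi\in C_{b}(X)$, i.e. $\mu_{n}\to\mu$ weakly.
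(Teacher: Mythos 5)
The paper offers no proof of this theorem: it is quoted from the literature (\cite[Theorem 8.7.1]{Bo}, \cite{Du1}), so your self-contained attempt stands or falls on its own. Your reduction is set up correctly: $L$ is a positive, normalized linear functional on the Stone vector lattice $C_{b}(X)$, the Baire and Borel $\sigma$-algebras coincide on a metric space because every closed $F$ is the zero set of $x\mapsto\rho(x,F)$, and the Daniell--Stone theorem would finish the job once $\sigma$-smoothness of $L$ is established. You also correctly isolate $\sigma$-smoothness as the only real obstacle.

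However, your contradiction argument does not close. The function $g=f_{N}-f_{n_{2}}$ depends on $m_{1}$, because $n_{2}$ is chosen \emph{after} $m_{1}$ so as to make $\langle f_{n_{2}},\mu_{m_{1}}\rangle<\delta$. For that one fixed $g$ you exhibit a single index $m_{1}$ with $\langle g,\mu_{m_{1}}\rangle>b-2\delta$ while $\lim_{m}\langle g,\mu_{m}\rangle=L(g)<\delta$; a convergent (hence Cauchy) sequence of reals may take a large value at one index, so nothing is contradicted. To contradict Cauchyness of $\left(\langle g,\mu_{m}\rangle\right)_{m}$ you would need, for a \emph{single fixed} $g$, pairs of indices beyond every threshold with oscillation at least $b/2$, and your construction cannot supply them: since $L(f_{n_{2}})=b_{n_{2}}\geq b$, one has $\langle f_{n_{2}},\mu_{m}\rangle>b-\delta>\delta$ for all sufficiently large $m$, so the requirement $\langle f_{n_{2}},\mu_{m_{1}}\rangle<\delta$ bounds $m_{1}$ once $n_{2}$ is fixed; letting $m_{1}$ grow forces $n_{2}$, and hence $g$, to change. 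What your argument actually establishes is that the convergence $\langle\varphi,\mu_{m}\rangle\to L(\varphi)$ is not uniform over $\{\varphi\in C_{b}(X):0\leq\varphi\leq1\}$, which is perfectly consistent with the hypothesis. A genuine repair requires a gliding-hump construction: one must assemble a single test function from infinitely many blocks $f_{k_{j}}-f_{k_{j+1}}$ on which the measures $\mu_{m_{j}}$ oscillate, and the hard point --- where the metric structure and the known proofs (Alexandroff, Dudley, Bogachev) do real work --- is keeping that infinite sum inside $C_{b}(X)$, since an infinite subsum of the telescoping series is in general only lower semicontinuous. That missing idea is the actual substance of the theorem.
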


For a given integer $k\geq 1,t_{1},...,t_{k}\geq 0$ and $\mu \in \mathcal{M}%
_{1}$ we let $Q_{t_{k},t_{k-1},...,t_{1}}\mu =Q_{t_{k}}...Q_{t_{1}}\mu .$ We
will use the following two simple lemmas. Let $\left\Vert \cdot \right\Vert
_{\mathrm{TV}}$ denotes the total variation norm (which agrees with operator
norm).

\begin{lemma}
\label{QT}For given $k\geq 1,t_{1},...,t_{k}\geq 0,$%
\begin{equation*}
\lim_{T\rightarrow \infty }\sup_{\mu \in \mathcal{M}_{1}}\left\Vert
Q_{T,t_{k},...,t_{1}}\mu -Q_{T}\mu \right\Vert _{\mathrm{TV}}=0.
\end{equation*}
\end{lemma}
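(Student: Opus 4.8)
The plan is to reduce the statement to a single-step estimate bounding $\left\Vert Q_{T}Q_{t}\mu -Q_{T}\mu \right\Vert _{\mathrm{TV}}$ and then to telescope along the composition $Q_{t_{k}}\cdots Q_{t_{1}}$. The guiding intuition is that $Q_{T}$ averages $P_{s}^{\ast }$ over a long time interval, so it almost absorbs any fixed-time shift. First I would record the dual semigroup identity $P_{s+r}^{\ast }=P_{s}^{\ast }P_{r}^{\ast }$, which follows from $P_{s+r}=P_{s}P_{r}$ by duality: for $\varphi \in B_{b}(X)$ and $\mu \in \mathcal{M}_{1}$ one has $\left\langle \varphi ,P_{s+r}^{\ast }\mu \right\rangle =\left\langle P_{s}P_{r}\varphi ,\mu \right\rangle =\left\langle \varphi ,P_{r}^{\ast }P_{s}^{\ast }\mu \right\rangle$, and the factors commute since $s+r=r+s$. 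Consequently, with the substitution $u=s+r$,
\[
Q_{T}P_{r}^{\ast }\mu =\frac{1}{T}\int_{0}^{T}P_{s}^{\ast }P_{r}^{\ast }\mu \,ds=\frac{1}{T}\int_{0}^{T}P_{s+r}^{\ast }\mu \,ds=\frac{1}{T}\int_{r}^{T+r}P_{u}^{\ast }\mu \,du.
\]

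The key step is the cancellation this produces. Subtracting the analogous expression for $Q_{T}\mu$, the overlapping part of the two time-averages cancels and leaves only two boundary slices,
\[
Q_{T}P_{r}^{\ast }\mu -Q_{T}\mu =\frac{1}{T}\left( \int_{T}^{T+r}P_{u}^{\ast }\mu \,du-\int_{0}^{r}P_{u}^{\ast }\mu \,du\right) .
\]
Since each $P_{u}^{\ast }\mu $ is a probability measure, each of the two integrals has total variation at most $r$, so $\left\Vert Q_{T}P_{r}^{\ast }\mu -Q_{T}\mu \right\Vert _{\mathrm{TV}}\leq 2r/T$. Writing $Q_{T}Q_{t}\mu =t^{-1}\int_{0}^{t}Q_{T}P_{r}^{\ast }\mu \,dr$ and using that the total variation norm of an integral is bounded by the integral of the norms, I obtain the single-step bound
\[
\left\Vert Q_{T}Q_{t}\mu -Q_{T}\mu \right\Vert _{\mathrm{TV}}\leq \frac{1}{t}\int_{0}^{t}\frac{2r}{T}\,dr=\frac{t}{T},
\]
uniformly over $\mu \in \mathcal{M}_{1}$.

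To handle the composition I would set $\nu _{0}=\mu $ and $\nu _{j}=Q_{t_{j}}\cdots Q_{t_{1}}\mu $ for $1\leq j\leq k$, so that each $\nu _{j}\in \mathcal{M}_{1}$ and $\nu _{j}=Q_{t_{j}}\nu _{j-1}$. Telescoping gives
\[
Q_{T,t_{k},\dots ,t_{1}}\mu -Q_{T}\mu =Q_{T}\nu _{k}-Q_{T}\nu _{0}=\sum_{j=1}^{k}\left( Q_{T}Q_{t_{j}}\nu _{j-1}-Q_{T}\nu _{j-1}\right) ,
\]
and applying the single-step bound to each summand (with $\nu _{j-1}$ in place of $\mu $) yields $\left\Vert Q_{T,t_{k},\dots ,t_{1}}\mu -Q_{T}\mu \right\Vert _{\mathrm{TV}}\leq T^{-1}\sum_{j=1}^{k}t_{j}$. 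As $k$ and $t_{1},\dots ,t_{k}$ are fixed, the right-hand side is independent of $\mu $ and tends to $0$ as $T\rightarrow \infty $, which proves the lemma.

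I expect the only real obstacle to be the single-step cancellation argument, namely recognizing that the two long averages differ only by boundary slices of length $r\leq t$, so the discrepancy is $O(t/T)$; the semigroup identity, the telescoping, and the final limit are routine. The lone point demanding mild care is justifying the interchange of the total variation norm with the time integral, which is legitimate because $r\mapsto P_{r}^{\ast }\mu $ is weakly measurable and the integrals are well-defined vector-valued integrals of probability measures.
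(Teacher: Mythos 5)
Your proof is correct. The paper itself gives no argument for this lemma -- it simply cites Lemma~2 of Komorowski--Peszat--Szarek -- so there is nothing to compare against line by line; your write-up supplies exactly the standard argument behind that reference: the identity $Q_TP_r^{*}\mu-Q_T\mu=\frac{1}{T}\bigl(\int_T^{T+r}P_u^{*}\mu\,du-\int_0^r P_u^{*}\mu\,du\bigr)$ giving the $2r/T$ boundary-slice estimate, the averaged single-step bound $\lVert Q_TQ_t\mu-Q_T\mu\rVert_{\mathrm{TV}}\le t/T$, and the telescoping sum $\sum_j t_j/T$, all uniform in $\mu$. The only cosmetic point is that $Q_t$ is defined in the paper only for $t>0$, so for any $t_j=0$ one should read $Q_{t_j}$ as the identity, under which convention your bound $t_j/T=0$ remains consistent.
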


\begin{proof}
See, e.g., \cite[Lemma 2]{KPS}.
\end{proof}

\begin{lemma}
\label{ineq}For any $\alpha \in (0,1),$ $k>0$ and sufficiently small $%
\varepsilon >0$ (depending on $\alpha $ and $k$),%
\begin{equation*}
\frac{1-\alpha (1+\varepsilon )}{1-\alpha (1-\sqrt[k]{\varepsilon })^{k}}>1-%
\sqrt[k+1]{\varepsilon }.
\end{equation*}
\end{lemma}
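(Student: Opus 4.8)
The plan is to rewrite the inequality as the positivity of a single expression and then read off its sign from the leading-order behaviour as $\varepsilon\to 0^{+}$. Write $D(\varepsilon)=1-\alpha(1-\sqrt[k]{\varepsilon})^{k}$. For $\varepsilon\in(0,1)$ we have $0\le(1-\sqrt[k]{\varepsilon})^{k}\le 1$, hence $1-\alpha\le D(\varepsilon)\le 1$; in particular $D(\varepsilon)>0$, so I may clear the (positive) denominator without changing the direction of the inequality. The claim then becomes $N(\varepsilon)>0$, where $N(\varepsilon)=[\,1-\alpha(1+\varepsilon)\,]-(1-\sqrt[k+1]{\varepsilon})\,D(\varepsilon)$.

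A short computation, substituting the definition of $D(\varepsilon)$ and cancelling the terms $1$ and $-\alpha$, collapses $N$ to
\[
N(\varepsilon)=\alpha\bigl[(1-\sqrt[k]{\varepsilon})^{k}-1-\varepsilon\bigr]+\sqrt[k+1]{\varepsilon}\,D(\varepsilon).
\]
The point is that the two summands live on different scales: the second is of order $\varepsilon^{1/(k+1)}$ (because $D(\varepsilon)\to 1-\alpha$), whereas the bracket in the first is only of order $\varepsilon^{\min(1/k,\,1)}$, which is negligible compared with $\varepsilon^{1/(k+1)}$. Concretely, I would divide by $\sqrt[k+1]{\varepsilon}$ and pass to the limit:
\[
\frac{N(\varepsilon)}{\sqrt[k+1]{\varepsilon}}=\alpha\,\frac{(1-\sqrt[k]{\varepsilon})^{k}-1-\varepsilon}{\sqrt[k+1]{\varepsilon}}+D(\varepsilon)\longrightarrow 1-\alpha>0 \qquad(\varepsilon\to 0^{+}),
\]
from which $N(\varepsilon)>0$ on some interval $(0,\varepsilon_{0})$ follows at once, giving the lemma with $\varepsilon_{0}=\varepsilon_{0}(\alpha,k)$.

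The one step needing care—and the main obstacle—is justifying that the first fraction tends to $0$ for every real $k>0$. Using $(1-u)^{k}-1=-ku+o(u)$ as $u=\sqrt[k]{\varepsilon}\to 0$ (valid for non-integer $k$ since $u\mapsto(1-u)^{k}$ is differentiable at $0$ with derivative $-k$), the bracket equals $-k\sqrt[k]{\varepsilon}-\varepsilon+o(\sqrt[k]{\varepsilon})$, so its magnitude is of order $\varepsilon^{\min(1/k,1)}$: the term $-k\sqrt[k]{\varepsilon}$ dominates when $k\ge 1$, while $-\varepsilon$ dominates when $0<k\le 1$. In either regime $\min(1/k,1)>1/(k+1)$ (for $k\ge 1$ because $1/k>1/(k+1)$, for $k\le 1$ because $1>1/(k+1)$), whence $\varepsilon^{\min(1/k,1)}/\sqrt[k+1]{\varepsilon}\to 0$. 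This exponent bookkeeping across the two ranges of $k$ is the only genuinely delicate part; everything else is routine algebra.
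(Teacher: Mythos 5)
Your proof is correct and takes essentially the same approach as the paper: the paper's one-line argument rests precisely on the limit $(1-(1-\sqrt[k]{\varepsilon})^{k})/\sqrt[k+1]{\varepsilon}\to 0$ as $\varepsilon\to 0^{+}$, which is the content of your key step, and the remaining ``direct calculation'' is exactly your clearing of the positive denominator and division by $\sqrt[k+1]{\varepsilon}$ to land on the limit $1-\alpha>0$. Your exponent bookkeeping over the two regimes $k\ge 1$ and $0<k\le 1$ is a correct, slightly more detailed justification of that limit.
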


\begin{proof}
By a direct calculation, using the formula $(1-(1-\sqrt[k]{\varepsilon }%
)^{k})/\sqrt[k+1]{\varepsilon }\rightarrow 0$ when $\varepsilon \rightarrow
0^{+}.$
\end{proof}

The following proposition is a key result for our analysis. Its proof is
partly inspired by \cite[Theorem 4.1]{LaYo}.

\begin{proposition}
\label{inv}Assume that there exists $z\in X$ such that for every $%
\varepsilon >0$%
\begin{equation*}
\liminf_{t\rightarrow \infty }\frac{1}{t}\int_{0}^{t}P_{s}^{\ast }\delta
_{z}(B(z,\varepsilon ))ds>0.
\end{equation*}%
If the family $(P_{t}\varphi )_{t\geq 0}$ is equicontinuous at $z$ for every
$\varphi \in C_{b}(X),$ then the semigroup $(P_{t})_{t\geq 0}$ has an
invariant probability measure.
\end{proposition}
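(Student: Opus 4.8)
The plan is to produce the invariant measure as a weak limit of the Cesàro averages $\nu_t := Q_t\delta_z = t^{-1}\int_0^t P_s^\ast\delta_z\,ds$, and to obtain the limit not from compactness (which is unavailable) but from the weak sequential completeness in Theorem \ref{complete}. The reduction is purely formal once we observe that for $\varphi\in C_b(X)$,
\begin{equation*}
\langle \varphi,\nu_t\rangle =\frac1t\int_0^t\langle \varphi,P_s^\ast\delta_z\rangle\,ds=\frac1t\int_0^t P_s\varphi(z)\,ds.
\end{equation*}
Thus the whole problem becomes: show that for every $\varphi\in C_b(X)$ the scalar $\langle\varphi,\nu_t\rangle$ converges as $t\to\infty$ (equivalently, that $(\nu_n)$ is weakly Cauchy), after which Theorem \ref{complete} furnishes a weak limit $\mu_\ast\in\mathcal{M}_1(X)$.

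The heart of the argument, and the step I expect to be the main obstacle, is the weak-Cauchy property. Here I would combine the two hypotheses with Lemmas \ref{QT} and \ref{ineq}. Fix $\varphi\in C_b(X)$, normalised so that $0\le\varphi\le 1$, and fix $\eta>0$. The e-property at $z$ supplies a radius $\delta>0$ with $\sup_{s\ge 0}\lvert P_s\varphi(y)-P_s\varphi(z)\rvert<\eta$ whenever $y\in B(z,\delta)$; averaging in $s$ shows that the Cesàro average $s\mapsto \tfrac1T\int_0^T P_s\varphi\,ds$ also differs from its value at $z$ by at most $\eta$ throughout $B(z,\delta)$, uniformly in $T$. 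The standing hypothesis gives $\alpha:=\liminf_{t\to\infty}\nu_t(B(z,\delta))>0$, so $\nu_t(B(z,\delta))\ge\alpha/2$ for large $t$; hence on the ball $B(z,\delta)$, which carries a definite mass, the averaged function is essentially constant. On the other hand Lemma \ref{QT} allows me to replace $\langle\varphi,Q_T\nu_t\rangle=\langle\varphi,Q_{T,t}\delta_z\rangle$ by $\langle\varphi,\nu_T\rangle$ up to an error tending to $0$ as $T\to\infty$, uniformly in the inner average. Splitting $X=B(z,\delta)\cup B(z,\delta)^{c}$, estimating the inner integral on $B(z,\delta)$ by near-constancy and on its complement by $0\le\varphi\le1$, and then iterating this averaging through the multi-step operators $Q_{t_k,\dots,t_1}$, I obtain a recursion for the oscillation of $\langle\varphi,\nu_t\rangle$ in which the loss at each of $k$ steps is controlled precisely by the quotient appearing in Lemma \ref{ineq}. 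That inequality is exactly what is needed to propagate the lower bound $\alpha$ through the iteration without accumulating error, forcing $\limsup_t\langle\varphi,\nu_t\rangle-\liminf_t\langle\varphi,\nu_t\rangle$ below a bound that vanishes as $\eta\to0$. As $\eta>0$ is arbitrary, $\langle\varphi,\nu_t\rangle$ converges; this is the delicate part, replacing Szarek's tightness argument.

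With convergence in hand, Theorem \ref{complete} yields $\nu_n\to\mu_\ast$ weakly for $t_n=n$, and it remains to check $P_r^\ast\mu_\ast=\mu_\ast$ for every $r\ge 0$. From $\nu_n=n^{-1}\int_0^n P_s^\ast\delta_z\,ds$ one computes $P_r^\ast\nu_n=n^{-1}\int_r^{n+r}P_s^\ast\delta_z\,ds$, so that $\lVert P_r^\ast\nu_n-\nu_n\rVert_{\mathrm{TV}}\le 2r/n\to 0$. Consequently, for each $\varphi\in C_b(X)$,
\begin{equation*}
\langle P_r\varphi,\mu_\ast\rangle=\lim_{n\to\infty}\langle P_r\varphi,\nu_n\rangle=\lim_{n\to\infty}\langle\varphi,P_r^\ast\nu_n\rangle=\lim_{n\to\infty}\langle\varphi,\nu_n\rangle=\langle\varphi,\mu_\ast\rangle,
\end{equation*}
where the first equality uses the Feller property ($P_r\varphi\in C_b(X)$) together with $\nu_n\to\mu_\ast$ weakly, and the third uses the total-variation estimate above. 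Since $C_b(X)$ separates measures on a metric space, $P_r^\ast\mu_\ast=\mu_\ast$ for all $r\ge 0$, so $\mu_\ast$ is the desired invariant probability measure.
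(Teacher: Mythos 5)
Your overall strategy is the paper's: reduce everything to showing that $(\langle\varphi,Q_n\delta_z\rangle)_{n}$ is Cauchy for each $\varphi\in C_b(X)$, invoke Theorem \ref{complete} to obtain a weak limit $\mu_\ast$, and verify invariance via $\lVert P_r^\ast Q_n\delta_z-Q_n\delta_z\rVert_{\mathrm{TV}}\le 2r/n$ together with weak continuity of $P_r^\ast$. The reduction and the final invariance check in your write-up are complete and correct.

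The gap is in the central step. You assert that splitting $X=B(z,\delta)\cup B(z,\delta)^{c}$ and ``iterating this averaging through $Q_{t_k,\dots,t_1}$'' yields a recursion whose loss per step is controlled by Lemma \ref{ineq}, but you never construct that recursion, and the one concrete choice you do make is the wrong one. The iteration the paper carries out writes $Q_{t_k,\dots,t_1}Q_{n_i}\delta_{x_{n_i}}$ as a convex combination of measures $\nu_i^j$ supported in $B(z,\delta)$ (pushed forward by the remaining averages) plus a residual $\mu_i^k$ of weight $\prod_{s\le k}(1-\sigma_s)$; to continue it one must show the residual still charges the ball, $Q_{t_k}\mu_i^{k-1}(B(z,\delta))>\sigma_k$. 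This is obtained by subtracting from the total mass in the ball (bounded \emph{below} by $\alpha(1-\varepsilon)$) the contributions of the already-extracted pieces, each of which must be bounded \emph{above} by $\alpha(1+\varepsilon)$ --- and the near-cancellation of these two matching bounds is precisely what Lemma \ref{ineq} quantifies. Since the $\nu_i^j$ are supported on $B(z,\delta)$ and not at $z$, the upper bound has to hold for $Q_{t_k,\dots,t_s}\delta_y(B(z,\delta))$ uniformly over $y\in B(z,\delta)$; this forces $\alpha$ to be defined as $\liminf_{t\to\infty}\sup_{x\in B(z,\delta)}Q_t\delta_x(B(z,\delta))$, the times $t_k$ to be chosen along a subsequence realizing this liminf (to get the upper bound (\ref{sep12})), and the comparison to be run for $Q_{n_i}\delta_{x_{n_i}}$ with $x_{n_i}$ nearly attaining the supremum (to get the lower bound (\ref{sep111})), transferring back to $Q_n\delta_z$ only at the very end by equicontinuity at $z$. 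Your $\alpha=\liminf_t Q_t\delta_z(B(z,\delta))$ supplies only the lower bound at the single point $z$; the extracted pieces may then return to the ball with mass well above $\alpha(1+\varepsilon)$, the estimated residual mass can become negative, and the induction --- the only place Lemma \ref{ineq} is actually used --- cannot be closed. This is exactly the delicate part of the argument, and as written it is missing.
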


\begin{proof}
Fix $\varphi \in C_{b}(X)$ and $\bar{\varepsilon}>0$. By assumption, there
exists $\delta >0$ such that%
\begin{equation*}
\left\vert P_{t}\varphi (x)-P_{t}\varphi (y)\right\vert <\bar{\varepsilon}
\end{equation*}%
for $t\in \lbrack 0,\infty )$ and $x,y\in B(z,\delta ).$ Let%
\begin{equation*}
\alpha =\liminf_{t\rightarrow \infty }\sup_{x\in B(z,\delta )}Q_{t}\delta
_{x}(B(z,\delta ))>0.
\end{equation*}%
We may assume that $\alpha <1.$ Select a positive integer $K$ such that $%
2(1-\alpha )^{K}\left\Vert \varphi \right\Vert _{\infty }<\bar{\varepsilon}.$
Then there exists $0<\varepsilon <1$ such that $2(1-\alpha (1-\sqrt[K]{%
\varepsilon })^{K})^{K}\left\Vert \varphi \right\Vert _{\infty }<\bar{%
\varepsilon},$ $\frac{1-\varepsilon }{(1-\sqrt[K]{\varepsilon })^{K}}<2$
and, by Lemma \ref{ineq},%
\begin{equation}
\frac{1-\alpha (1+\varepsilon )}{1-(1-\sqrt[k]{\varepsilon })^{k}\alpha }>1-%
\sqrt[k+1]{\varepsilon }  \label{sep10}
\end{equation}%
for any $k=1,...,K.$ It follows from Lemma \ref{QT} and the definition of $%
\alpha $ that there exists an increasing sequence $(t_{k})\subset (0,\infty )
$ such that%
\begin{eqnarray}
\sup_{x\in B(z,\delta )}Q_{T,t_{k},t_{k-1},...,t_{j}}\delta _{x}(B(z,\delta
)) &>&\alpha (1-\frac{\varepsilon }{3}),  \label{sep11} \\
Q_{t_{k},t_{k-1},...,t_{j}}\delta _{y}(B(z,\delta )) &<&\alpha
(1+\varepsilon ),  \label{sep12} \\
\left\Vert Q_{T,t_{k},...,t_{j}}\mu -Q_{T}\mu \right\Vert _{\mathrm{TV}}
&<&\min \{\frac{\bar{\varepsilon}}{K},\frac{\alpha \varepsilon }{3}\}
\label{sep125}
\end{eqnarray}%
for any $y\in B(z,\delta ),$ $\mu \in \mathcal{M}_{1}$, $k\geq 1,1\leq j\leq
k$ and $T\geq t_{k+1}.$

Fix positive integers $n_{1},n_{2}\geq t_{K+1}.$ Then there exist $%
x_{n_{1}},x_{n_{2}}\in B(z,\delta )$ such that for any $k\leq K$ and $1\leq
j\leq k,$
\begin{equation}
Q_{n_{i},t_{k},t_{k-1},...,t_{j}}\delta _{x_{n_{i}}}(B(z,\delta ))>\alpha
(1-\varepsilon ),\ i=1,2.  \label{sep111}
\end{equation}%
We show that
\begin{equation*}
\left\vert \left\langle \varphi ,Q_{n_{1}}\delta _{x_{n_{1}}}\right\rangle
-\left\langle \varphi ,Q_{n_{2}}\delta _{x_{n_{2}}}\right\rangle \right\vert
<8\bar{\varepsilon}.
\end{equation*}%
To this end, let $\sigma _{k}=(1-\varepsilon )(1-\sqrt{\varepsilon })...(1-%
\sqrt[k]{\varepsilon })\alpha $ and define by induction four finite
sequences of probability measures $(\mu _{1}^{k}),(\mu _{2}^{k}),(\nu
_{1}^{k}),(\nu _{2}^{k}),1\leq k\leq K$ such that $%
%TCIMACRO{\TeXButton{supp}{\supp}}%
%BeginExpansion
\supp%
%EndExpansion
\nu _{i}^{k}\subset B(z,\delta ),1\leq k\leq K,$ and%
\begin{align}
Q_{t_{k},...,t_{1}}Q_{n_{i}}\delta _{x_{n_{i}}}=& \sigma
_{1}Q_{t_{k},...,t_{2}}\nu _{i}^{1}+\sigma _{2}(1-\sigma
_{1})Q_{t_{k},...,t_{3}}\nu _{i}^{2}  \label{sep13} \\
& +...+\sigma _{k}\prod_{s=1}^{k-1}(1-\sigma _{s})\nu
_{i}^{k}+\prod_{s=1}^{k}(1-\sigma _{s})\mu _{i}^{k}  \notag
\end{align}%
for $i=1,2.$ If $k=1,$ we set%
\begin{align*}
\nu _{i}^{1}(A)& =\frac{Q_{t_{1}}Q_{n_{i}}\delta _{x_{n_{i}}}(A\cap
B(z,\delta ))}{Q_{t_{1}}Q_{n_{i}}\delta _{x_{n_{i}}}(B(z,\delta ))}, \\
\mu _{i}^{1}(A)& =\frac{1}{1-\sigma _{1}}(Q_{t_{1}}Q_{n_{i}}\delta
_{x_{n_{i}}}(A)-\sigma _{1}\nu _{i}^{1}(A)),\ i=1,2.
\end{align*}%
Notice that $\mu _{i}^{1},\nu _{i}^{1}\in \mathcal{M}_{1}$ since $%
Q_{t_{1}}Q_{n_{i}}\delta _{x_{n_{i}}}(B(z,\delta ))=Q_{n_{i}}Q_{t_{1}}\delta
_{x_{n_{i}}}(B(z,\delta ))>\sigma _{1}$ by (\ref{sep111}) and $%
%TCIMACRO{\TeXButton{supp}{\supp}}%
%BeginExpansion
\supp%
%EndExpansion
\nu _{i}^{1}\subset B(z,\delta ).$ Clearly, $Q_{t_{1}}Q_{n_{i}}\delta
_{x_{n_{i}}}=(1-\sigma _{1})\mu _{i}^{1}+\sigma _{1}\nu _{i}^{1}.$ Suppose
now that we have chosen $\mu _{1}^{j},\mu _{2}^{j},\nu _{1}^{j},\nu _{2}^{j}$
$(0\leq j\leq k-1)$ in such a way that $\supp\nu _{i}^{j}\subset B(z,\delta
),1\leq j\leq k-1,$ and%
\begin{eqnarray*}
Q_{t_{k-1},...,t_{1}}Q_{n_{i}}\delta _{x_{n_{i}}} &=&\sigma
_{1}Q_{t_{k-1},...,t_{2}}\nu _{i}^{1}+\sigma _{2}(1-\sigma
_{1})Q_{t_{k-1},...,t_{3}}\nu _{i}^{2} \\
&&+...+\sigma _{k-1}\prod_{s=1}^{k-2}(1-\sigma _{s})\nu
_{i}^{k-1}+\prod_{s=1}^{k-1}(1-\sigma _{s})\mu _{i}^{k-1}
\end{eqnarray*}%
for $i=1,2.$ It follows from (\ref{sep12}) that for any $1\leq j,s\leq k-1,$%
\begin{equation*}
Q_{t_{k},t_{k-1},...,t_{s}}\nu _{i}^{j}(B(z,\delta
))=\int_{X}Q_{t_{k},t_{k-1},...,t_{s}}\delta _{x}(B(z,\delta ))\nu
_{i}^{j}(dx)<\alpha (1+\varepsilon ).
\end{equation*}%
Hence%
\begin{align*}
& \prod_{s=1}^{k-1}(1-\sigma _{s})Q_{t_{k}}\mu _{i}^{k-1}(B(z,\delta
))>Q_{t_{k},t_{k-1},...,t_{1}}Q_{n_{i}}\delta _{x_{n_{i}}}(B(z,\delta )) \\
-& \alpha (1+\varepsilon )(\sigma _{1}+\sigma _{2}(1-\sigma _{1})+...+\sigma
_{k-1}\prod_{s=1}^{k-2}(1-\sigma _{s})) \\
>& \alpha (1-\varepsilon )-\alpha (1+\varepsilon )(\sigma _{1}+\sigma
_{2}(1-\sigma _{1})+...+\sigma _{k-1}\prod_{s=1}^{k-2}(1-\sigma _{s})),
\end{align*}%
since $Q_{t_{k},t_{k-1},...,t_{1}}Q_{n_{i}}\delta _{x_{n_{i}}}(B(z,\delta
))=Q_{n_{i}}Q_{t_{k},t_{k-1},...,t_{1}}\delta _{x_{n_{i}}}(B(z,\delta
))>\alpha (1-\varepsilon ),$ by (\ref{sep111}). Now notice that from (\ref%
{sep10}),
\begin{align*}
\frac{\alpha (1-\varepsilon )-\alpha (1+\varepsilon )\sigma _{1}}{1-\sigma
_{1}}& =\sigma _{1}\frac{1-\alpha (1+\varepsilon )}{1-\sigma _{1}}>\sigma
_{1}(1-\sqrt{\varepsilon })=\sigma _{2}, \\
\sigma _{2}\frac{1-\alpha (1+\varepsilon )}{1-\sigma _{2}}& >\sigma _{2}(1-%
\sqrt[3]{\varepsilon })=\sigma _{3}, \\
& ... \\
\sigma _{k-1}\frac{1-\alpha (1+\varepsilon )}{1-\sigma _{k-1}}& >\sigma
_{k-1}(1-\sqrt[k]{\varepsilon })=\sigma _{k}
\end{align*}%
and consequently,%
\begin{equation*}
\prod_{s=1}^{k-1}(1-\sigma _{s})Q_{t_{k}}\mu _{i}^{k-1}(B(z,\delta ))>\sigma
_{k}\prod_{s=1}^{k-1}(1-\sigma _{s})
\end{equation*}%
which gives%
\begin{equation*}
Q_{t_{k}}\mu _{i}^{k-1}(B(z,\delta ))>\sigma _{k}.
\end{equation*}%
Define%
\begin{align*}
\nu _{i}^{k}(A)& =\frac{Q_{t_{k}}\mu _{i}^{k-1}(A\cap B(z,\delta ))}{%
Q_{t_{k}}\mu _{i}^{k-1}(B(z,\delta ))}, \\
\mu _{i}^{k}(A)& =\frac{1}{1-\sigma _{k}}(Q_{t_{k}}\mu _{i}^{k-1}(A)-\sigma
_{k}\nu _{i}^{k}(A)).
\end{align*}%
Thus $\mu _{i}^{k},\nu _{i}^{k}\in \mathcal{M}_{1}$ and $\supp\nu
_{i}^{k}\subset (B(z,\delta ))$ for $i=1,2.$ Furthermore, (\ref{sep13}) is
satisfied which completes the inductive step.

Now, from (\ref{sep13}), we have
\begin{align*}
& \left\vert \left\langle \varphi ,Q_{t_{K},...,t_{1}}Q_{n_{1}}\delta
_{x_{n_{1}}}\right\rangle -\left\langle \varphi
,Q_{t_{K},...,t_{1}}Q_{n_{2}}\delta _{x_{n_{2}}}\right\rangle \right\vert  \\
\leq \,& \sigma _{1}\left\vert \left\langle \varphi ,Q_{t_{K},...,t_{2}}(\nu
_{i}^{1}-\nu _{2}^{1})\right\rangle \right\vert +\sigma _{2}(1-\sigma
_{1})\left\vert \left\langle \varphi ,Q_{t_{K},...,t_{3}}(\nu _{1}^{2}-\nu
_{2}^{2})\right\rangle \right\vert  \\
& +...+\sigma _{k}\prod_{s=1}^{K-1}(1-\sigma _{s})\left\vert \left\langle
\varphi ,\nu _{1}^{K}-\nu _{2}^{K}\right\rangle \right\vert
+\prod_{s=1}^{K}(1-\sigma _{s})\left\vert \left\langle \varphi ,\mu
_{1}^{K}-\mu _{2}^{K}\right\rangle \right\vert  \\
\overset{(\ref{sep125})}{\leq }& \sigma _{1}\left\vert \left\langle \varphi
,Q_{t_{K}}(\nu _{i}^{1}-\nu _{2}^{1})\right\rangle \right\vert +2\frac{\bar{%
\varepsilon}}{K}+\sigma _{2}(1-\sigma _{1})\left\vert \left\langle \varphi
,Q_{t_{K},...,t_{3}}(\nu _{1}^{2}-\nu _{2}^{2})\right\rangle \right\vert +2%
\frac{\bar{\varepsilon}}{K} \\
& +...+\sigma _{k}\prod_{s=1}^{K-1}(1-\sigma _{s})\left\vert \left\langle
\varphi ,\nu _{1}^{K}-\nu _{2}^{K}\right\rangle \right\vert
+\prod_{s=1}^{K}(1-\sigma _{s})\left\vert \left\langle \varphi ,\mu
_{1}^{K}-\mu _{2}^{K}\right\rangle \right\vert  \\
\leq \,& \sigma _{1}\left\vert \left\langle \frac{1}{t_{K}}%
\int_{0}^{t_{K}}P_{s}\varphi \ ds,\nu _{1}^{1}-\nu _{2}^{1}\right\rangle
\right\vert +\sigma _{2}(1-\sigma _{1})\left\vert \left\langle \frac{1}{t_{K}%
}\int_{0}^{t_{K}}P_{s}\varphi \ ds,\nu _{1}^{2}-\nu _{2}^{2}\right\rangle
\right\vert  \\
& +...+\sigma _{k}\prod_{s=1}^{K-1}(1-\sigma _{s})\left\vert \left\langle
\varphi ,\nu _{1}^{K}-\nu _{2}^{K}\right\rangle \right\vert
+2\prod_{s=1}^{K}(1-\sigma _{s})\left\Vert \varphi \right\Vert _{\infty
}+2(K-1)\frac{\bar{\varepsilon}}{K} \\
\leq \,& (\sigma _{1}+\sigma _{1}(1-\sigma _{K-1})+...+\sigma _{1}(1-\sigma
_{K-1})^{K-1})\sup_{x,y\in B(z,\delta ),t\geq 0}\left\vert P_{t}\varphi
(x)-P_{t}\varphi (y)\right\vert  \\
& +2(1-\sigma _{K})^{K}\left\Vert \varphi \right\Vert _{\infty }+2(K-1)\bar{%
\varepsilon}<\frac{\sigma _{1}}{\sigma _{K-1}}\bar{\varepsilon}+\bar{%
\varepsilon}+2(K-1)\frac{\bar{\varepsilon}}{K}<5\bar{\varepsilon}
\end{align*}%
and hence%
\begin{equation*}
\left\vert \left\langle \varphi ,Q_{n_{1}}\delta _{x_{n_{1}}}\right\rangle
-\left\langle \varphi ,Q_{n_{2}}\delta _{x_{n_{2}}}\right\rangle \right\vert
\leq \left\vert \left\langle \varphi ,Q_{t_{K},...,t_{1}}Q_{n_{1}}\delta
_{x_{n_{1}}}\right\rangle -\left\langle \varphi
,Q_{t_{K},...,t_{1}}Q_{n_{2}}\delta _{x_{n_{2}}}\right\rangle \right\vert +%
\frac{2\bar{\varepsilon}}{K}<6\bar{\varepsilon}.
\end{equation*}%
Therefore,%
\begin{align*}
& \left\vert \left\langle \varphi ,Q_{n_{1}}\delta _{z}\right\rangle
-\left\langle \varphi ,Q_{n_{2}}\delta _{z}\right\rangle \right\vert \leq
\left\vert \left\langle \varphi ,Q_{n_{1}}\delta _{z}\right\rangle
-\left\langle \varphi ,Q_{n_{1}}\delta _{x_{n_{1}}}\right\rangle \right\vert
\\
+\,& \left\vert \left\langle \varphi ,Q_{n_{1}}\delta
_{x_{n_{1}}}\right\rangle -\left\langle \varphi ,Q_{n_{2}}\delta
_{x_{n_{2}}}\right\rangle \right\vert +\left\vert \left\langle \varphi
,Q_{n_{2}}\delta x_{n_{2}}\right\rangle -\left\langle \varphi
,Q_{n_{2}}\delta _{z}\right\rangle \right\vert  \\
<\,& \frac{1}{n_{1}}\int_{0}^{n_{1}}\left\vert P_{s}\varphi (z)-P_{s}\varphi
(x_{n_{1}})\right\vert ds+6\bar{\varepsilon}+\frac{1}{n_{2}}%
\int_{0}^{n_{2}}\left\vert P_{s}\varphi (z)-P_{s}\varphi
(x_{n_{2}})\right\vert ds<8\bar{\varepsilon}
\end{align*}%
for $n_{1},n_{2}\geq t_{K+1}.$ Hence%
\begin{equation*}
\lim_{n,m\rightarrow \infty }\left\vert \left\langle \varphi ,Q_{n}\delta
_{z}\right\rangle -\left\langle \varphi ,Q_{m}\delta _{z}\right\rangle
\right\vert \leq 8\bar{\varepsilon}
\end{equation*}%
and letting $\bar{\varepsilon}\rightarrow 0,$ we have%
\begin{equation}
\lim_{n,m\rightarrow \infty }\left\vert \left\langle \varphi ,Q_{n}\delta
_{z}\right\rangle -\left\langle \varphi ,Q_{m}\delta _{z}\right\rangle
\right\vert =0  \label{Cau}
\end{equation}%
which shows that $(Q_{n}\delta _{z})_{n\in \mathbb{N}}$ is a weak Cauchy
sequence. We now conclude from Theorem \ref{complete} that $(Q_{n}\delta
_{z})$ converges weakly to some probability measure $\mu _{\ast }$ which is
invariant for the semigroup $(P_{t})_{t\geq 0}$ since, by a standard
argument,%
\begin{equation*}
\left\Vert P_{t}^{\ast }Q_{n}\delta _{z}-Q_{n}\delta _{z}\right\Vert _{%
\mathrm{TV}}\leq \frac{4t}{n}\rightarrow 0\text{ if }n\rightarrow \infty
\end{equation*}%
for every $t\geq 0,$ and $P_{t}^{\ast }$ is weakly continuous (as the dual
of a Feller operator).
\end{proof}

\begin{remark}
\label{remark}If $X$ is complete and each $Q_{n}\delta _{z}$ is tight, it
follows from \cite[Corollary 8.6.3]{Bo} that for the uniform tightness of $%
(Q_{n}\delta _{z})_{n\in \mathbb{N}}$ it is sufficient to obtain (\ref{Cau})
for every bounded Lipschitz function $\varphi $. Then there exists a
subsequence $(Q_{n_{k}}\delta _{z})_{k\in \mathbb{N}}$ converging weakly to
a $(P_{t})$-invariant probability measure $\mu _{\ast }.$ Therefore, we can
relax the assumption of the e-property for bounded continuous functions in
Proposition \ref{inv} to the (usual) e-property in this case (e.g., in
Polish spaces).
\end{remark}

We can now state our criterion for the existence of the invariant measure of
a Markov semigroup in any metric space (in particular, nonseparable or
noncomplete).

\begin{theorem}
\label{inv2}Let $X$ be a metric space and assume that there exists $z\in X$
and a probability measure $\mu $ such that for every $\varepsilon >0$%
\begin{equation*}
\liminf_{t\rightarrow \infty }\frac{1}{t}\int_{0}^{t}P_{s}^{\ast }\mu
(B(z,\varepsilon ))ds>0.
\end{equation*}%
If $(P_{t}\varphi )_{t\geq 0}$ is equicontinuous at $z$ for every $\varphi
\in C_{b}(X)$, then the semigroup $(P_{t})_{t\geq 0}$ has an invariant
probability measure.
\end{theorem}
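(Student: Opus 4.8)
The plan is to mirror the proof of Proposition \ref{inv}, the only structural change being that the returning point mass $\delta_z$ is replaced by the given measure $\mu$; the weak sequential completeness of Theorem \ref{complete} will again do the final work. Fix $\varphi \in C_b(X)$ and $\bar\varepsilon > 0$. Since $(P_t\varphi)_{t\ge 0}$ is equicontinuous at $z$, I first choose $\delta > 0$ so that $\sup_{t\ge 0}|P_t\varphi(x)-P_t\varphi(y)| < \bar\varepsilon$ for all $x,y \in B(z,\delta)$, and I set $\alpha = \liminf_{t\to\infty} Q_t\mu(B(z,\delta))$, which is positive by hypothesis (applied with the radius $\delta$). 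This $\alpha$, now produced by $\mu$ rather than by $\delta_z$, is meant to play the role of the lower-density constant of Proposition \ref{inv}.

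Next I would fix the integer $K$, the number $\varepsilon \in (0,1)$ and the increasing sequence $(t_k)$ exactly as in Proposition \ref{inv}, so that Lemma \ref{ineq} and the estimates corresponding to (\ref{sep11})--(\ref{sep125}) hold. The heart of the argument is to run the inductive decomposition (\ref{sep13}) with $Q_{n_i}\mu$ in place of $Q_{n_i}\delta_{x_{n_i}}$: at each stage one peels off the normalised restriction $\nu_i^k$ of the current measure to $B(z,\delta)$, leaving a remainder $\mu_i^k$ whose coefficient $\prod_{s\le k}(1-\sigma_s)$ decays. Two inputs drive this. Lemma \ref{QT} lets me replace the long averages $Q_{t_k,\dots,t_1}Q_{n_i}\mu$ by $Q_{n_i}\mu$ up to an arbitrarily small total-variation error, so that the lower bound on the near-$z$ mass of $Q_{n_i}\mu$ (at least $\alpha(1-\varepsilon)$ for large $n_i$, by the very definition of $\alpha$) is what feeds the induction and is designed to yield $Q_{t_k}\mu_i^{k-1}(B(z,\delta)) > \sigma_k$. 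The e-property, on the other hand, forces every measure supported in $B(z,\delta)$ to be transported like the mass sitting at $z$, so that each difference $\langle\varphi, Q_{t_K,\dots,t_{j+1}}(\nu_1^j-\nu_2^j)\rangle$ is controlled by the equicontinuity modulus $\bar\varepsilon$. Telescoping exactly as in Proposition \ref{inv} yields $|\langle\varphi, Q_{t_K,\dots,t_1}Q_{n_1}\mu\rangle - \langle\varphi, Q_{t_K,\dots,t_1}Q_{n_2}\mu\rangle| < C\bar\varepsilon$; a last use of Lemma \ref{QT} removes the prefix $Q_{t_K,\dots,t_1}$ and shows that $(Q_n\mu)_{n}$ is weak Cauchy, i.e.\ the analogue of (\ref{Cau}) holds for every $\varphi \in C_b(X)$. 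Theorem \ref{complete} then produces the weak limit $\mu_\ast$, and the same total-variation estimate as in Proposition \ref{inv}, with the weak continuity of $P_t^\ast$, shows $\mu_\ast$ is invariant.

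The step I expect to be the main obstacle is precisely this replacement of the point mass by the spread-out $\mu$. For $\delta_z$ the proof of Proposition \ref{inv} closes by comparing $Q_{n_i}\delta_z$ with $Q_{n_i}\delta_{x_{n_i}}$ through the e-property, which is legitimate only because both base points lie in $B(z,\delta)$; for a general $\mu$ this comparison is unavailable, since the e-property gives no control over the part of $\mu$ far from $z$. Hence the induction must be applied to $Q_{n_i}\mu$ itself, and the delicate point is that the lower bound on the near-$z$ mass is supplied by the hypothesis on $\mu$, whereas the upper bound on the already-extracted pieces $\nu_i^k$ (the analogue of (\ref{sep12})) is governed by the intrinsic recurrence rate near $z$ dictated by the e-property. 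I would reconcile the two by selecting the $t_k$ at the times furnished by Lemma \ref{QT}, where these returns are mutually comparable; in the regime where the intrinsic recurrence at $z$ is itself positive one may simply invoke Proposition \ref{inv} directly, so that the genuinely new content sits in the `flow-through' case, where the mass $\mu$ feeds into $B(z,\delta)$ does not arise from recurrence of $z$ and the remainder $\mu_i^K$ must be killed purely through the smallness of its coefficient.
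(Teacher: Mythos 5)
Your overall strategy is the paper's: rerun the proof of Proposition \ref{inv} with $\mu$ in place of the point masses and finish with Theorem \ref{complete}. You have also put your finger on exactly the right difficulty: the lower bound $\alpha(1-\varepsilon)$ feeding the induction now comes from $\mu$, while the upper bound (\ref{sep12}) on the mass that the already-extracted pieces $\nu_i^k$ (supported in $B(z,\delta)$) send back into $B(z,\delta)$ is governed by $\gamma:=\liminf_{t\to\infty}\sup_{x\in B(z,\delta)}Q_t\delta_x(B(z,\delta))$, which bears no a priori relation to your $\alpha=\liminf_{t\to\infty}Q_t\mu(B(z,\delta))$. If $\gamma$ exceeds $\alpha$, the key step $\alpha(1-\varepsilon)-\gamma(1+\varepsilon)\sigma_1>\sigma_2(1-\sigma_1)$ can fail outright (the extracted pieces may return more mass to $B(z,\delta)$ than the hypothesis guarantees in total), so the claim in your second paragraph that the analogues of (\ref{sep11})--(\ref{sep125}) all hold with your single constant $\alpha$ is unjustified, and the inductive bound $Q_{t_k}\mu_i^{k-1}(B(z,\delta))>\sigma_k$ breaks.

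Your proposed repair in the last paragraph is the right idea but is pinned to the wrong quantity, and the ``reconciliation by selecting the $t_k$ at the times furnished by Lemma \ref{QT}'' is not an argument: Lemma \ref{QT} is a uniform total-variation estimate and says nothing about comparability of return rates. The paper resolves the tension by a clean dichotomy on $\gamma$ itself, which (being monotone in $\delta$) is either positive for every small $\delta$ or identically zero below some $\bar\delta$. If $\gamma>0$ for all small $\delta$, the \emph{proof} of Proposition \ref{inv} --- not its statement, whose hypothesis concerns $\delta_z$ and is unavailable here --- runs verbatim with $\alpha=\gamma$ and points $x_{n_i}\in B(z,\delta)$ nearly attaining the sup, never touching $\mu$, and yields weak Cauchyness of $(Q_n\delta_z)$. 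If $\gamma=0$ for all small $\delta$, one chooses the $t_k$ along times where the sup is below, say, $\alpha\varepsilon/3$; then the extracted pieces return negligible mass and your constant $\alpha=\liminf_t Q_t\mu(B(z,\delta))$ does close the induction on $(Q_n\mu)$. Your dichotomy ``recurrence of $z$ positive / flow-through'' leaves uncovered the middle regime in which $z$ itself is not recurrent but some points of $B(z,\delta)$ return to $B(z,\delta)$ at a positive rate; that regime is precisely where your main argument fails and where Proposition \ref{inv} cannot be invoked as a black box.
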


\begin{proof}
Consider two cases. If there exists $\bar{\delta}>0$ such that%
\begin{equation*}
\liminf_{t\rightarrow \infty }\sup_{x\in B(z,\delta )}Q_{t}\delta
_{x}(B(z,\delta ))>0
\end{equation*}%
for every $\delta \in (0,\bar{\delta}),$ then we can follow the proof of
Proposition \ref{inv} directly. Thus it suffices to consider the case when
there exists $\bar{\delta}>0$ such that
\begin{equation*}
\liminf_{t\rightarrow \infty }\sup_{x\in B(z,\delta )}Q_{t}\delta
_{x}(B(z,\delta ))=0
\end{equation*}%
for every $\delta \in (0,\bar{\delta}).$ Then we can follow the proof of the
proposition with $\delta _{x_{n_{1}}},\delta _{x_{n_{2}}}$ replaced by $\mu $%
.
\end{proof}

\begin{remark}
As noted in the Introduction, in the case of Polish spaces, our condition is
stronger than that in \cite[Theorem 3.1]{LaSz}, which is in turn stronger
than the condition given in Proposition 2 of \cite{BKS}: there exists $z\in
X $ such that for every $\varepsilon >0,$%
\begin{equation*}
\limsup_{t\rightarrow \infty }\sup_{\mu \in \mathcal{M}_{1}}\frac{1}{t}%
\int_{0}^{t}P_{s}^{\ast }\delta _{z}(B(z,\varepsilon ))ds>0.
\end{equation*}
It results from our method that refines the technique from \cite{LaYo}: it
appears that we have to control eventually the whole sequence $(Q_{n}\mu )$
to be able to select the same subsequence for any $\bar{\varepsilon}>0$. On
the other hand, even in a Polish space, this gives a new, perhaps more
direct way to find an invariant measure that may have further consequences.
\end{remark}

Note that our approach works in the discrete case as well and thus we have
the following theorem (see, e.g., \cite{EtKu} for unexplained notions).

\begin{theorem}
Let $X$ be a metric space and $\pi :X\times \mathcal{B}\left( X\right)
\rightarrow \lbrack 0,1]$ a transition function for a discrete-time Markov
chain $\Phi $. Assume that there exists $z\in X$ and a probability measure $%
\mu $ such that for every $\varepsilon >0$%
\begin{equation*}
\liminf_{n\rightarrow \infty }\left( \frac{1}{n}\sum\limits_{i=1}^{n}%
\int_{X}\pi ^{i}(x,B(z,\varepsilon ))\mu (dx)\right) >0.
\end{equation*}%
If the family $\left\{ \int_{X}\varphi (y)\pi ^{n}(z,dy):n\in \mathbb{N}%
\right\} $ is equicontinuous for every $\varphi \in C_{b}(X)$, then $\Phi $
has an invariant probability measure.
\end{theorem}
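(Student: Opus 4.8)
The plan is to transcribe the proof of Theorem \ref{inv2} (hence of Proposition \ref{inv}) into discrete time, since every ingredient has a faithful discrete counterpart. First I would fix notation: write $P\varphi(x)=\int_X\varphi(y)\pi(x,dy)$ for the transition operator and $P^{\ast}\nu(A)=\int_X\pi(x,A)\nu(dx)$ for its dual on $\mathcal{M}_1$, so that $\int_X\pi^{i}(x,\cdot)\,\mu(dx)=(P^{\ast})^{i}\mu$ and $\int_X\varphi(y)\,\pi^{n}(x,dy)=P^{n}\varphi(x)$. Setting $Q_n\mu=\frac1n\sum_{i=1}^{n}(P^{\ast})^{i}\mu$, the first hypothesis reads $\liminf_{n\to\infty}Q_n\mu(B(z,\varepsilon))>0$ for every $\varepsilon>0$, and the second is exactly the e-property of $(P^{n})_{n\in\mathbb{N}}$ at $z$ for bounded continuous $\varphi$, namely $\lim_{x\to z}\sup_{n}|P^{n}\varphi(x)-P^{n}\varphi(z)|=0$. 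Thus the statement is the discrete analogue of Theorem \ref{inv2}, and I would assume, as throughout the paper, that $\pi$ is Feller, i.e.\ $P(C_b(X))\subset C_b(X)$, so that $P^{\ast}$ is weakly continuous.

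Next I would establish the discrete forms of the two auxiliary lemmas. Lemma \ref{ineq} is purely algebraic and is used verbatim. Lemma \ref{QT} rests on the single elementary estimate $\|Q_N(P^{\ast})^{j}\eta-Q_N\eta\|_{\mathrm{TV}}\le 2j/N$, valid for every $\eta\in\mathcal{M}_1$, since $Q_N(P^{\ast})^{j}\eta-Q_N\eta=\frac1N\sum_{i=1}^{N}\big((P^{\ast})^{i+j}\eta-(P^{\ast})^{i}\eta\big)$ telescopes to a signed sum of $2j$ measures of total mass $N^{-1}$. As $Q_t\eta$ is a convex combination of the $(P^{\ast})^{j}\eta$, $1\le j\le t$, this yields $\|Q_N Q_t\eta-Q_N\eta\|_{\mathrm{TV}}\le 2t/N$, and iterating along a finite string $t_1,\dots,t_k$ of positive integers gives $\sup_{\mu\in\mathcal{M}_1}\|Q_{N,t_k,\dots,t_1}\mu-Q_N\mu\|_{\mathrm{TV}}\le\frac2N\sum_{i=1}^{k}t_i\to 0$, the discrete Lemma \ref{QT}.

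With these in hand I would run the argument of Proposition \ref{inv} and Theorem \ref{inv2} over integer times. Fixing $\varphi\in C_b(X)$ and $\bar\varepsilon>0$, the e-property at $z$ provides $\delta>0$ with $\sup_{n}|P^{n}\varphi(x)-P^{n}\varphi(y)|<\bar\varepsilon$ for $x,y\in B(z,\delta)$, and I would split into the two cases of Theorem \ref{inv2} according as $\liminf_{N}\sup_{x\in B(z,\delta)}Q_N\delta_x(B(z,\delta))$ is positive for all small $\delta$ or vanishes. Choosing $K$ and $\varepsilon$ via Lemma \ref{ineq}, selecting the increasing integer sequence $(t_k)$ from the discrete Lemma \ref{QT}, and building the same four families $(\mu_i^{k}),(\nu_i^{k})$ with $\supp\nu_i^{k}\subset B(z,\delta)$, the identical telescoping estimate gives $|\langle\varphi,Q_{n_1}\delta_z\rangle-\langle\varphi,Q_{n_2}\delta_z\rangle|<8\bar\varepsilon$ in the first case (using that the point-transfer terms $\frac1n\sum_{s=1}^{n}|P^{s}\varphi(z)-P^{s}\varphi(x_{n})|$ are controlled by the e-property since $x_n\in B(z,\delta)$), and $|\langle\varphi,Q_{n_1}\mu\rangle-\langle\varphi,Q_{n_2}\mu\rangle|<6\bar\varepsilon$ in the second case, where the base point masses are replaced by $\mu$ and the defining role of $\alpha$ is played by $\liminf_N Q_N\mu(B(z,\delta))>0$. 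Throughout, the continuous averages $\frac1{t_K}\int_0^{t_K}P_s\varphi\,ds$ are replaced by $\frac1{t_K}\sum_{s=1}^{t_K}P^{s}\varphi$. Letting $\bar\varepsilon\to 0$ shows that $(Q_n\delta_z)$ (resp.\ $(Q_n\mu)$) is weak Cauchy, so by Theorem \ref{complete} it converges weakly to some $\mu_{\ast}\in\mathcal{M}_1$; invariance follows as in Proposition \ref{inv} from $\|P^{\ast}Q_n\mu-Q_n\mu\|_{\mathrm{TV}}=\frac1n\|(P^{\ast})^{n+1}\mu-P^{\ast}\mu\|_{\mathrm{TV}}\le 2/n\to 0$ together with the weak continuity of $P^{\ast}$.

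I expect the main obstacle to be bookkeeping rather than analysis: one must check that the discrete quantities $Q_{T,t_k,\dots,t_j}\delta_x(B(z,\delta))$ obey, for integer $T\ge t_{k+1}$, the same three inequalities used in Proposition \ref{inv}, and that the replacement of point masses by $\mu$ in the degenerate case is compatible with the inductive splitting. Once these are confirmed, the combinatorial heart of the argument is literally the same computation, and the only genuinely new input is the elementary shift estimate above, together with the implicit Feller hypothesis needed to make $P^{\ast}$ weakly continuous in the final invariance step.
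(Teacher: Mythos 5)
Your proposal is correct and is exactly the route the paper intends: the theorem is stated there without proof, the authors merely remarking that the argument of Proposition \ref{inv} and Theorem \ref{inv2} carries over to discrete time, and your transcription --- with the Ces\`{a}ro averages $Q_{n}\mu =\frac{1}{n}\sum_{i=1}^{n}(P^{\ast })^{i}\mu $, the elementary shift estimate $\left\Vert Q_{N}(P^{\ast })^{j}\eta -Q_{N}\eta \right\Vert _{\mathrm{TV}}\leq 2j/N$ standing in for Lemma \ref{QT}, and $\left\Vert P^{\ast }Q_{n}\mu -Q_{n}\mu \right\Vert _{\mathrm{TV}}\leq 2/n$ in the final invariance step --- supplies precisely the details the paper omits. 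The one caveat is the one you already flag yourself: the Feller property of $\pi $ must be read as a standing hypothesis (as it is for the semigroups throughout the paper) so that $P^{\ast }$ is weakly continuous at the end.
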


Notice that although in Polish spaces our results are weaker than Szarek's
theorem \cite[Proposition 2.1]{Sz4} (see also \cite[Theorem 3.1]{LaSz}),
they are strong enough to be applied analogously in the theory of iterated
function systems and stochastic partial differential equations as the
following example shows (at least when $X$ is complete).

\begin{example}
\label{1}(see \cite[Example, p. 1853]{Sz4}). Let $X$ be a complete metric
space and consider on $X$ an iterated function system
\begin{equation*}
(w,p)_{N}=(w_{1},...,w_{N},p_{1},...,p_{N}).
\end{equation*}%
Thus $w_{i}:X\rightarrow X$ are continuous transformations and $%
p_{i}:X\rightarrow \lbrack 0,1],i=1,...,N,$ are continuous functions that
satisfy $\sum_{i=1}^{N}p_{i}(x)=1$ for $x\in X.$ Furthermore, let $(\Omega ,%
\mathcal{F},\mathbb{P})$ be a probability space and $(\tau _{n})_{n\geq 0}$
be a sequence of random variables $\tau _{n}:\Omega \rightarrow \mathbb{R}%
_{+}$ with $\tau _{0}=0$ and such that $\Delta \tau _{n}=\tau _{n}-\tau
_{n-1}$ are independent and have the same density $\gamma e^{-\gamma t}.$
Given a continuous semi-flow $(S(t))_{t\geq 0}$ on $X,$ we define the Markov
chain $\Phi =(\Phi _{n})_{n\geq 1}$ in the following way: choose $x\in X$
and let $\xi _{1}=S(\tau _{1})(x).$ Next, we randomly select an integer $%
i_{1}$ from the set $\{1,...,N\}$ in such a way that the probablity of
choosing $k$ is $p_{k}(\xi _{1}).$ Set $\Phi _{1}=w_{i_{1}}(\xi _{1}).$
Having $\Phi _{1}$ we define $\xi _{2}=S(\Delta \tau _{2})(\Phi _{1}),$
select $i_{2}$ in a similar way, set $\Phi _{2}=w_{i_{2}}(\xi _{2})$ and so
on. As in \cite{Sz4} we make the following assumptions:

\begin{enumerate}
\item[(i)] there exists $r<1$ such that for $x,y\in X,$%
\begin{equation*}
\sum_{i=1}^{N}p_{i}(x)\rho (w_{i}(x),w_{i}(y))\leq r\rho (x,y),
\end{equation*}

\item[(ii)] there exists $a>0$ such that for $x,y\in X,$%
\begin{equation*}
\sum_{i=1}^{N}\left\vert p_{i}(x)-p_{i}(y)\right\vert \leq a\rho (x,y),
\end{equation*}

\item[(iii)] there exists $\kappa >0$ such that for $x,y\in X$ and $t\geq 0,$%
\begin{equation*}
\rho (S(t)(x),S(t)(y))\leq e^{\kappa t}\rho (x,y).
\end{equation*}
\end{enumerate}

Moreover, we assume that $r+\frac{\kappa }{\gamma }<1$ and the semi-flow $%
(S(t))_{t\geq 0}$ has a (compact) global attractor. Define%
\begin{equation*}
Pf(x)=\sum\limits_{i=1}^{N}\int_{0}^{\infty }\gamma e^{-\gamma
t}p_{i}(S(t)(x))f(w_{i}(S(t)(x)))dt
\end{equation*}%
for $f\in C_{b}(X),x\in X$, and notice that for every $\varepsilon >0$ there
exists $\bar{t}>0$ such that
\begin{equation*}
P^{\ast }\mu (\bigcup\nolimits_{i=1}^{N}\bigcup\nolimits_{t\in \lbrack 0,%
\bar{t}]}w_{i}(S(t)B))>\mu (B)-\varepsilon
\end{equation*}%
for every $B\in \mathcal{B}\left( X\right) $ and $\mu \in \mathcal{M}_{1}.$
Hence $P^{\ast }$ transforms tight measures into tight measures. Then,
taking into account \cite[Corolary 2.4.1]{Sz3} and Remark \ref{remark}, the
proof given in \cite{Sz4} applies verbatim here to the general case and
shows that $\Phi $ has an invariant probablity measure when $X$ is a
complete metric space.
\end{example}

\section{Asymptotic stability}

In this section we prove the following criterion of stability which extends
\cite[Corollary 5.4]{SzWo} and, in the discrete case, \cite[Theorem 3.3]{Sz4}%
. The proof is an application of Proposition \ref{inv} and adapts the
arguments from \cite[Theorem 4.1]{LaYo} (see also \cite[Theorem 2]{SSU}).

\begin{theorem}
\label{Th1}Let $(P_{t})_{t\geq 0}$ be a Feller semigroup such that for any $%
\varphi \in C_{b}(X)$ the family $(P_{t}\varphi )_{t\geq 0}$ is
equicontinuous at every point $x$ of $X$. Then $(P_{t})_{t\geq 0}$ is
asymptotically stable if and only if there exists $z\in X$ such that for
every $\varepsilon >0$
\begin{equation}
\inf_{x\in X}\liminf_{t\rightarrow \infty }P_{t}^{\ast }\delta
_{x}(B(z,\varepsilon ))>0.  \label{sep}
\end{equation}
\end{theorem}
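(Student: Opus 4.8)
The plan is to prove both implications, starting with the (easy) forward direction. Suppose $(P_t)_{t\ge 0}$ is asymptotically stable with unique invariant measure $\mu_\ast$. Since $\mu_\ast$ is a probability measure, there is a point $z$ with $\mu_\ast(B(z,\varepsilon))>0$ for every $\varepsilon>0$ (e.g.\ a point of its support). For an arbitrary $x\in X$, asymptotic stability gives $P_t^\ast\delta_x\to\mu_\ast$ weakly, and since $B(z,\varepsilon)$ is open, the portmanteau theorem yields $\liminf_{t\to\infty}P_t^\ast\delta_x(B(z,\varepsilon))\ge\mu_\ast(B(z,\varepsilon))$. As the right-hand side does not depend on $x$ and is strictly positive, I obtain $\inf_{x\in X}\liminf_{t\to\infty}P_t^\ast\delta_x(B(z,\varepsilon))\ge\mu_\ast(B(z,\varepsilon))>0$, which is exactly \eqref{sep}.

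For the converse I assume \eqref{sep}. Taking $x=z$ in the infimum gives $\liminf_{t\to\infty}P_t^\ast\delta_z(B(z,\varepsilon))>0$ for every $\varepsilon$, and since the liminf of a Cesàro average is at least the liminf of the integrand, the hypothesis $\liminf_{t\to\infty}\frac1t\int_0^tP_s^\ast\delta_z(B(z,\varepsilon))\,ds>0$ of Proposition \ref{inv} holds. Because the assumed e-property for $C_b(X)$ gives equicontinuity of $(P_t\varphi)_{t\ge0}$ at $z$, Proposition \ref{inv} furnishes an invariant probability measure $\mu_\ast$. It then remains to establish that $P_t^\ast\mu\to\mu_\ast$ weakly for every $\mu$, which will simultaneously give uniqueness.

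The heart of the matter is a concentration/coupling estimate proving $\lim_{t\to\infty}\langle\varphi,P_t^\ast\mu-P_t^\ast\nu\rangle=0$ for every $\varphi\in C_b(X)$ and all $\mu,\nu\in\mathcal M_1$; taking $\nu=\mu_\ast$ (so that $P_t^\ast\mu_\ast=\mu_\ast$) then finishes the proof and forces uniqueness. Fix $\varphi$ with $\|\varphi\|_\infty\le1$ and $\bar\varepsilon>0$. By equicontinuity at $z$, choose $\delta>0$ with $\sup_{t\ge0}|P_t\varphi(y)-P_t\varphi(z)|<\bar\varepsilon$ for $y\in B(z,\delta)$, and fix $\beta>0$ strictly below $\inf_x\liminf_t P_t^\ast\delta_x(B(z,\delta))$, which is positive by \eqref{sep}. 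Integrating the pointwise bound $\liminf_t P_t^\ast\delta_x(B(z,\delta))>\beta$ against an arbitrary $\rho\in\mathcal M_1$ and applying Fatou's lemma upgrades this to the measure-uniform statement $\liminf_{t\to\infty}P_t^\ast\rho(B(z,\delta))\ge\beta$. The scheme is to split off, at a large time, a common fraction $\beta$ of mass lying in $B(z,\delta)$: once $t_1$ is large one may write $P_{t_1}^\ast\mu=\beta\tilde\mu_1+(1-\beta)r_\mu^1$ and $P_{t_1}^\ast\nu=\beta\tilde\nu_1+(1-\beta)r_\nu^1$ with $\tilde\mu_1,\tilde\nu_1$ supported in $B(z,\delta)$, so that for every residual time $\sigma$ one has $\langle\varphi,P_{t_1+\sigma}^\ast\mu-P_{t_1+\sigma}^\ast\nu\rangle=\beta\langle P_\sigma\varphi,\tilde\mu_1-\tilde\nu_1\rangle+(1-\beta)\langle P_\sigma\varphi,r_\mu^1-r_\nu^1\rangle$, where the first term is at most $2\bar\varepsilon$ since $P_\sigma\varphi$ differs from $P_\sigma\varphi(z)$ by less than $\bar\varepsilon$ on $B(z,\delta)$, uniformly in $\sigma$. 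Iterating this splitting $K$ times, each time choosing the next concentration time large enough for the current pair of remainders, produces the telescoped bound $|\langle\varphi,P_t^\ast\mu-P_t^\ast\nu\rangle|\le 2\bar\varepsilon\sum_{j=0}^{K-1}\beta(1-\beta)^j+2(1-\beta)^K\le 2\bar\varepsilon+2(1-\beta)^K$, valid for \emph{all} $t$ beyond the fixed sum of concentration times. Choosing $K$ with $2(1-\beta)^K<\bar\varepsilon$ gives $\limsup_{t\to\infty}|\langle\varphi,P_t^\ast\mu-P_t^\ast\nu\rangle|\le 3\bar\varepsilon$, and $\bar\varepsilon\to0$ yields the claim.

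The main obstacle is that \eqref{sep} controls the liminf in $t$ only pointwise in the starting measure, with no uniform threshold time, so a naive infinite iteration would demand a uniformity that is simply not available. I circumvent this by performing only \emph{finitely many} concentration steps, which lets each concentration time be selected after the current remainders are fixed, while exploiting that the equicontinuity at $z$ is uniform in the time parameter; the latter is what allows the final residual time $\sigma$ to range over all of $[0,\infty)$ and thereby converts an estimate at specially constructed times into a genuine $\limsup$ bound for all large $t$. Passing from the Cesàro-type input needed for existence to the direct convergence of $P_t^\ast\mu$ is precisely what this two-stage design---Proposition \ref{inv} for existence, the coupling estimate for convergence---is meant to accomplish.
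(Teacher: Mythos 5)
Your proposal is correct and follows essentially the same route as the paper: the forward direction via the portmanteau inequality on the open ball, and the converse by combining Proposition \ref{inv} (whose Ces\`{a}ro hypothesis follows from \eqref{sep} exactly as you note) with the finite iterated splitting $P_{t_k}^{\ast}\mu_i^{k-1}=(1-\alpha)\mu_i^{k}+\alpha\nu_i^{k}$, $\supp\nu_i^{k}\subset B(z,\delta)$, telescoped to the bound $\varepsilon+2(1-\alpha)^{k}$ for all $t\geq t_1+\cdots+t_k$ and then specialized to $\nu=\mu_{\ast}$. The paper's proof is this same two-stage coupling argument, so no further comparison is needed.
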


\begin{proof}
If $(P_{t})_{t\geq 0}$ is asymptotically stable with an invariant
probability measure $\mu _{\ast }$, then for all $z\in
%TCIMACRO{\TeXButton{supp}{\supp}}%
%BeginExpansion
\supp%
%EndExpansion
\mu _{\ast }$ and $x\in X,$%
\begin{equation*}
\liminf_{t\rightarrow \infty }P_{t}^{\ast }\delta _{x}(B(z,\varepsilon
))\geq \mu _{\ast }(B(z,\varepsilon ))>0.
\end{equation*}%
To prove the reverse implication, fix $\varepsilon >0,$ $\mu _{1},\mu
_{2}\in \mathcal{M}_{1}$ and $\varphi \in C_{b}(X).$ By assumption, there
exists $\delta >0$ such that%
\begin{equation*}
\left\vert P_{t}\varphi (x)-P_{t}\varphi (y)\right\vert <\varepsilon
\end{equation*}%
for $t\in \lbrack 0,\infty )$ and $x,y\in B(z,\delta ).$ By (\ref{sep}),
there exists $\alpha >0$ such that%
\begin{equation*}
\liminf_{t\rightarrow \infty }P_{t}^{\ast }\delta _{x}(B(z,\delta ))>\alpha
\end{equation*}%
for every $x\in X.$ Then, by Fatou's lemma,
\begin{equation}
\liminf_{t\rightarrow \infty }P_{t}^{\ast }\nu (B(z,\delta ))\geq
\int_{X}\liminf_{t\rightarrow \infty }P_{t}^{\ast }\delta _{x}(B(z,\delta
))\nu (dx)>\alpha  \label{sep1}
\end{equation}%
for every $\nu \in \mathcal{M}_{1}.$ We shall define a sequence $%
(t_{k})\subset T$ and four sequences of probability measures $(\mu
_{1}^{k}),(\mu _{2}^{k}),(\nu _{1}^{k}),(\nu _{2}^{k})$ by induction. Let $%
t_{0}=0,\mu _{1}^{0}=\nu _{1}^{0}=\mu _{1},\mu _{2}^{0}=\nu _{2}^{0}=\mu
_{2}.$ Suppose that we have chosen $t_{k-1},\mu _{1}^{k-1},\mu
_{2}^{k-1},\nu _{1}^{k-1},\nu _{2}^{k-1}$ $(k\geq 1).$ According to (\ref%
{sep1}), take $t_{k}$ such that%
\begin{equation}
P_{t_{k}}^{\ast }\mu _{i}^{k-1}(B(z,\delta ))>\alpha ,\ i=1,2,  \label{sep2}
\end{equation}%
and define%
\begin{align*}
\nu _{i}^{k}(A)& =\frac{P_{t_{k}}^{\ast }\mu _{i}^{k-1}(A\cap B(z,\delta ))}{%
P_{t_{k}}^{\ast }\mu _{i}^{k-1}(B(z,\delta ))}, \\
\mu _{i}^{k}(A)& =\frac{1}{1-\alpha }(P_{t_{k}}^{\ast }\mu
_{i}^{k-1}(A)-\alpha \nu _{i}^{k}(A))
\end{align*}%
for $A\in \mathcal{B}\left( X\right) ,$ $i=1,2.$ It is not difficult to see,
using (\ref{sep2}), that $\nu _{i}^{k},\mu _{i}^{k}$ are probability
measures. Now notice that
\begin{equation*}
P_{t_{k}}^{\ast }\mu _{i}^{k-1}=(1-\alpha )\mu _{i}^{k}+\alpha \nu _{i}^{k}
\end{equation*}%
and it is easy to verify by induction that%
\begin{equation*}
P_{t_{1}+...+t_{k}}^{\ast }\mu _{i}=\alpha P_{t_{2}+...+t_{k}}^{\ast }\nu
_{i}^{1}+\alpha (1-\alpha )P_{t_{3}+...+t_{k}}^{\ast }\nu
_{i}^{2}+...+\alpha (1-\alpha )^{k-1}\nu _{i}^{k}+(1-\alpha )^{k}\mu _{i}^{k}
\end{equation*}%
for $k\geq 1$ and $i=1,2.$ Hence%
\begin{align*}
& \left\vert \left\langle \varphi ,P_{t}^{\ast }\mu _{1}\right\rangle
-\left\langle \varphi ,P_{t}^{\ast }\mu _{2}\right\rangle \right\vert \\
=\,& \left\vert \left\langle P_{t-(t_{1}+...+t_{k})}\varphi
,P_{t_{1}+...+t_{k}}^{\ast }\mu _{1}\right\rangle -\left\langle
P_{t-(t_{1}+...+t_{k})}\varphi ,P_{t_{1}+...+t_{k}}^{\ast }\mu
_{2}\right\rangle \right\vert \\
\leq \,& \alpha \left\vert \left\langle P_{t-t_{1}}\varphi ,\nu _{1}^{1}-\nu
_{2}^{1}\right\rangle \right\vert +\alpha (1-\alpha )\left\vert \left\langle
P_{t-(t_{1}+t_{2})}\varphi ,\nu _{1}^{2}-\nu _{2}^{2}\right\rangle
\right\vert \\
& +...+\alpha (1-\alpha )^{k-1}\left\vert \left\langle
P_{t-(t_{1}+...+t_{k})}\varphi ,\nu _{1}^{k}-\nu _{2}^{k}\right\rangle
\right\vert +2(1-\alpha )^{k} \\
\leq \,& (\alpha +\alpha (1-\alpha )+...+\alpha (1-\alpha
)^{k-1})\sup_{x,y\in B(z,\delta ),t\geq 0}\left\vert P_{t}\varphi
(x)-P_{t}\varphi (y)\right\vert \\
& +2(1-\alpha )^{k}\leq \varepsilon +2(1-\alpha )^{k}
\end{align*}%
for $t\geq t_{1}+...+t_{k},$ $\varphi \in C_{b}(X)$ and $\mu _{1},\mu
_{2}\in \mathcal{M}_{1}.$ In particular,%
\begin{equation*}
\left\vert \left\langle \varphi ,P_{t}^{\ast }\mu _{1}\right\rangle
-\left\langle \varphi ,\mu _{\ast }\right\rangle \right\vert \leq
\varepsilon +2(1-\alpha )^{k},
\end{equation*}%
where $\mu _{\ast }$ is an invariant measure from Proposition \ref{inv}.
Letting $\varepsilon \rightarrow 0,k\rightarrow \infty ,$ yields%
\begin{equation*}
\lim_{t\rightarrow \infty }\left\vert \left\langle \varphi ,P_{t}^{\ast }\mu
_{1}\right\rangle -\left\langle \varphi ,\mu _{\ast }\right\rangle
\right\vert =0
\end{equation*}%
which shows that $(P_{t})_{t\geq 0}$ is asymptotically stable.
\end{proof}

As it was noted in Remark \ref{remark}, we can weaken the assumption to the
plain e-property in the above theorem when $X$ is complete and each $%
Q_{n}\delta _{z}$ is tight. A sufficient condition for Polish spaces was
given in Theorem 1 of \cite{BKS}. We show that it is also a necessary
condition, which leads to the following theorem.

\begin{theorem}
Let $(P_{t})_{t\geq 0}$ be a Feller semigroup on a Polish space $X$ with the
e-property. The following conditions are equivalent:

\begin{enumerate}
\item[(i)] There exists $z\in X$ such that for every $\varepsilon >0$
\begin{equation*}
\inf_{x\in X}\liminf_{t\rightarrow \infty }P_{t}^{\ast }\delta
_{x}(B(z,\varepsilon ))>0,
\end{equation*}

\item[(ii)] For any $\varepsilon >0$ and bounded set $A\subset X$ there is a
bounded Borel set $B\subset X$ such that%
\begin{equation*}
\sup_{\mu \in \mathcal{M}_{1}^{A}}\limsup_{t\rightarrow \infty }\frac{1}{t}%
\int_{0}^{t}P_{s}^{\ast }\mu (B)ds>1-\epsilon
\end{equation*}%
and, there is $z\in X$ such that for any $\varepsilon >0$ and bounded set $%
A\subset X$
\begin{equation*}
\inf_{\mu _{1},\mu _{2}\in \mathcal{M}_{1}^{A}}\sup_{t>0}\min \{P_{t}^{\ast
}\mu _{1}(B(z,\varepsilon )),P_{t}^{\ast }\mu _{2}(B(z,\varepsilon ))\}>0,
\end{equation*}

\item[(iii)] $(P_{t})_{t\geq 0}$ is asymptotically stable.
\end{enumerate}
\end{theorem}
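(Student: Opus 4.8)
The plan is to split the statement into the two equivalences (i)$\Leftrightarrow$(iii) and (ii)$\Leftrightarrow$(iii), which together give the triple equivalence. The implication (iii)$\Rightarrow$(i) is immediate and repeats the easy half of Theorem~\ref{Th1}: if $(P_{t})_{t\ge 0}$ is asymptotically stable with invariant measure $\mu_{\ast}$, I would pick any $z\in\supp\mu_{\ast}$, observe that every ball $B(z,\varepsilon)$ is open with $\mu_{\ast}(B(z,\varepsilon))>0$, and conclude from the portmanteau theorem that $\liminf_{t\to\infty}P_{t}^{\ast}\delta_{x}(B(z,\varepsilon))\ge\mu_{\ast}(B(z,\varepsilon))$ for all $x$; the infimum over $x$ then still exceeds $0$.

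For (i)$\Rightarrow$(iii) the aim is to re-run the argument of Theorem~\ref{Th1}, the only new issue being that here we assume merely the plain e-property, so I must place myself in the situation of Remark~\ref{remark}. Concretely, I would first show that on the Polish space $X$ each Ces\`aro measure $Q_{n}\delta_{z}$ is tight; granting this, Remark~\ref{remark} (via Proposition~\ref{inv} and Theorem~\ref{complete}) produces a $(P_{t})$-invariant measure $\mu_{\ast}$, and the contraction estimate from the proof of Theorem~\ref{Th1}---which uses the e-property only through equicontinuity of $(P_{t}\varphi)$ on a single small ball $B(z,\delta)$---then upgrades this to $\langle\varphi,P_{t}^{\ast}\mu\rangle\to\langle\varphi,\mu_{\ast}\rangle$ for every $\mu\in\mathcal{M}_{1}$ and $\varphi\in C_{b}(X)$, i.e.\ asymptotic stability. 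The hard part is exactly this tightness step: the uniform lower bound (i) only says that a fixed fraction of the mass of $P_{t}^{\ast}\delta_{x}$ sits in every small ball around $z$, and promoting such local concentration to genuine tightness of the time averages is the delicate Szarek-type argument that relies on completeness of $X$ and on the e-property (cf.\ the discussion preceding the theorem and \cite{BKS}).

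To prove (iii)$\Rightarrow$(ii) I would exploit that on a Polish space the invariant measure $\mu_{\ast}$ is tight. Given $\varepsilon>0$ and a bounded set $A$, choose a compact $K$ with $\mu_{\ast}(K)>1-\varepsilon$ and a bounded open set $B\supseteq K$; since $P_{s}^{\ast}\mu\to\mu_{\ast}$ weakly, the Ces\`aro averages $\frac1t\int_{0}^{t}P_{s}^{\ast}\mu\,ds$ also converge weakly to $\mu_{\ast}$, so the portmanteau inequality for open sets yields $\limsup_{t\to\infty}\frac1t\int_{0}^{t}P_{s}^{\ast}\mu(B)\,ds\ge\mu_{\ast}(B)>1-\varepsilon$ for every $\mu\in\mathcal{M}_{1}^{A}$, which is the first part of (ii). For the second part I would take the same $z\in\supp\mu_{\ast}$: for any $\mu_{1},\mu_{2}\in\mathcal{M}_{1}^{A}$ one has $\liminf_{t\to\infty}P_{t}^{\ast}\mu_{i}(B(z,\varepsilon))\ge\mu_{\ast}(B(z,\varepsilon))$, so a common large $t$ makes $\min\{P_{t}^{\ast}\mu_{1}(B(z,\varepsilon)),P_{t}^{\ast}\mu_{2}(B(z,\varepsilon))\}$ arbitrarily close to $\mu_{\ast}(B(z,\varepsilon))$, bounding $\inf_{\mu_{1},\mu_{2}}\sup_{t>0}\min\{\cdots\}$ below by $\mu_{\ast}(B(z,\varepsilon))>0$ uniformly in $\mu_{1},\mu_{2}$.

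Finally, (ii)$\Rightarrow$(iii) is the sufficiency direction, which is precisely Theorem~1 of \cite{BKS}, so I would simply invoke it. Combining the four implications yields (i)$\Leftrightarrow$(iii) and (ii)$\Leftrightarrow$(iii), hence the equivalence of (i), (ii) and (iii). The single genuine obstacle in the whole scheme is the tightness of $(Q_{n}\delta_{z})_{n\in\mathbb{N}}$ used in (i)$\Rightarrow$(iii); everything else is either a direct portmanteau computation or a citation.
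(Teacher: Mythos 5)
Your decomposition is the same as the paper's: (i)$\Leftrightarrow$(iii) via Theorem~\ref{Th1} and Remark~\ref{remark}, (ii)$\Rightarrow$(iii) by citing \cite[Theorem 1]{BKS}, and (iii)$\Rightarrow$(ii) by a direct portmanteau computation using tightness of $\mu_{\ast}$ on the Polish space; your (iii)$\Rightarrow$(ii) argument is essentially verbatim what the paper writes. The one place you wobble is in flagging the tightness of $(Q_{n}\delta_{z})_{n\in\mathbb{N}}$ as an unresolved ``delicate Szarek-type'' obstacle: on a Polish space each individual $Q_{n}\delta_{z}$ is tight automatically (Ulam's theorem), and the \emph{uniform} tightness of the sequence is not obtained by promoting the local concentration in (i) but is exactly what Remark~\ref{remark} extracts from the weak Cauchy property~(\ref{Cau}) for Lipschitz test functions (which the proof of Proposition~\ref{inv} delivers under the plain e-property) together with \cite[Corollary 8.6.3]{Bo}. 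So the step you single out as the genuine obstacle is already discharged by the Remark you invoke, and no further argument is needed.
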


\begin{proof}
By Theorem \ref{Th1} and Remark \ref{remark}, (i) and (iii) are equivalent.
By \cite[Theorem 1]{BKS} (ii) implies (iii), so it suffices to show that
(iii)$\Rightarrow $(ii). Suppose that $(P_{t})_{t\geq 0}$ is asymptotically
stable with an invariant probability measure $\mu _{\ast }.$ Then, for any $%
\varepsilon >0$, there exists a compact set $K$ such that
\begin{equation*}
\limsup_{t\rightarrow \infty }\frac{1}{t}\int_{0}^{t}P_{s}^{\ast }\mu
(K)ds\geq \liminf_{t\rightarrow \infty }P_{t}^{\ast }\mu (K)\geq \mu _{\ast
}(K)>1-\varepsilon
\end{equation*}%
and the first condition holds with $B=K$ for any $\mu \in \mathcal{M}_{1}.$
Likewise,%
\begin{equation*}
\liminf_{t\rightarrow \infty }P_{t}^{\ast }\mu (B(z,\varepsilon ))\geq \mu
_{\ast }(B(z,\varepsilon ))>0
\end{equation*}%
for any $z\in
%TCIMACRO{\TeXButton{supp}{\supp}}%
%BeginExpansion
\supp%
%EndExpansion
\mu _{\ast }$ and $\mu \in \mathcal{M}_{1}$, which is stronger than we need.
\end{proof}

Since our approach also works in the discrete case, we obtain the following
theorem which extends \cite[Theorem 3.3]{Sz4}.

\begin{theorem}
\label{stab_dis}Let $X$ be a metric space and $\pi :X\times \mathcal{B}%
\left( X\right) \rightarrow \lbrack 0,1]$ a transition function for a
discrete-time Markov chain $\Phi $. Assume that there exists $z\in X$ such
that for every $\varepsilon >0$ there exists $\alpha >0$ such that%
\begin{equation*}
\liminf_{n\rightarrow \infty }\pi ^{n}(x,B(z,\varepsilon ))\geq \alpha
\end{equation*}%
for every $x\in X.$ If the family $\left\{ \int_{X}\varphi (y)\pi
^{n}(z,dy):n\in \mathbb{N}\right\} $ is equicontinuous for every $\varphi
\in C_{b}(X)$, then $\Phi $ has a unique invariant probability measure $\mu
_{\ast }$ and $\int_{X}\pi ^{n}(x,\cdot )\mu (dx)$ converges weakly to $\mu
_{\ast }$ as $n\rightarrow \infty $ for every $\mu \in \mathcal{M}_{1}.$
\end{theorem}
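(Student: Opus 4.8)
The plan is to reduce the discrete-time statement to the already-proven continuous-time machinery by treating the single transition operator as generating a discrete semigroup, and then to run the same two-stage argument used in Theorem \ref{Th1}: first establish existence of an invariant measure via the discrete analogue of Proposition \ref{inv}, and then prove the weak convergence (uniqueness following from it).

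First I would verify the hypotheses needed for existence. Writing $P$ for the Markov operator $Pf(x)=\int_X f(y)\,\pi(x,dy)$ and $P^{\ast}$ for its dual on $\mathcal{M}_1$, the averaged operators are $Q_n\mu=\tfrac{1}{n}\sum_{i=1}^{n}(P^{\ast})^{i}\mu$. The assumption $\liminf_{n\to\infty}\pi^{n}(x,B(z,\varepsilon))\ge\alpha$ for every $x$ gives, via Fatou's lemma exactly as in \eqref{sep1}, that $\liminf_{n\to\infty}(P^{\ast})^{n}\nu(B(z,\varepsilon))\ge\alpha$ for every $\nu\in\mathcal{M}_1$; taking Cesàro averages then yields $\liminf_{n\to\infty}Q_n\delta_z(B(z,\varepsilon))\ge\alpha>0$, which is precisely the hypothesis of the discrete version of Theorem \ref{inv2} (the discrete existence theorem stated just before Example \ref{1}). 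Together with the assumed equicontinuity of $\bigl\{\int_X\varphi(y)\pi^{n}(z,dy):n\in\mathbb{N}\bigr\}$ at $z$, that theorem produces an invariant probability measure $\mu_{\ast}$, obtained as a weak limit of $(Q_n\delta_z)$ via the weak sequential completeness of Theorem \ref{complete}.

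Next I would prove asymptotic stability by adapting the argument of Theorem \ref{Th1} to the discrete setting verbatim. Fix $\varepsilon>0$, $\mu_1,\mu_2\in\mathcal{M}_1$ and $\varphi\in C_b(X)$; equicontinuity at $z$ gives $\delta>0$ with $\sup_{n\ge0}\lvert P^{n}\varphi(x)-P^{n}\varphi(y)\rvert<\varepsilon$ for $x,y\in B(z,\delta)$, and the uniform lower bound gives $\alpha>0$ with $\liminf_{n}(P^{\ast})^{n}\nu(B(z,\delta))>\alpha$ for every $\nu$. I then construct inductively the integer times $t_k$ and the splitting measures $\nu_i^{k},\mu_i^{k}$ via $(P^{\ast})^{t_k}\mu_i^{k-1}=(1-\alpha)\mu_i^{k}+\alpha\nu_i^{k}$ with $\supp\nu_i^{k}\subset B(z,\delta)$, obtaining the same telescoping decomposition of $(P^{\ast})^{t_1+\dots+t_k}\mu_i$. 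Estimating $\lvert\langle\varphi,(P^{\ast})^{n}\mu_1\rangle-\langle\varphi,(P^{\ast})^{n}\mu_2\rangle\rvert$ as in Theorem \ref{Th1}, the terms involving $\nu_i^{k}$ are controlled by the equicontinuity (since those measures are supported in $B(z,\delta)$) and the remainder is bounded by $2(1-\alpha)^{k}$, yielding the bound $\varepsilon+2(1-\alpha)^{k}$ for all $n\ge t_1+\dots+t_k$. Taking $\mu_2=\mu_{\ast}$ (so $(P^{\ast})^{n}\mu_2=\mu_{\ast}$) and letting first $k\to\infty$ then $\varepsilon\to0$ gives $\langle\varphi,(P^{\ast})^{n}\mu_1\rangle\to\langle\varphi,\mu_{\ast}\rangle$ for every $\varphi\in C_b(X)$, i.e.\ weak convergence of $\int_X\pi^{n}(x,\cdot)\mu_1(dx)$ to $\mu_{\ast}$; uniqueness of $\mu_{\ast}$ is immediate from this convergence.

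The main obstacle, as in the continuous case, is ensuring that the splitting construction stays within $\mathcal{M}_1$ and that the $\nu_i^{k}$ remain supported in $B(z,\delta)$ so that equicontinuity can be applied to the leading terms; this hinges on the uniform lower bound \eqref{sep2} surviving the passage through Fatou's lemma, so that at each step $(P^{\ast})^{t_k}\mu_i^{k-1}(B(z,\delta))>\alpha$ can genuinely be arranged. Since the discrete averaged operators satisfy the same structural identities as the continuous ones and $P^{\ast}$ is weakly continuous (the discrete Feller property), no essentially new difficulty arises beyond bookkeeping: the continuous integrals $\tfrac{1}{t}\int_0^t$ are replaced by Cesàro sums $\tfrac{1}{n}\sum_{i=1}^{n}$ and the flow times $t_k$ become positive integers, so the proofs of Proposition \ref{inv} and Theorem \ref{Th1} transcribe directly.
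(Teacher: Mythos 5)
Your proposal is correct and follows essentially the same route as the paper, which proves this theorem only implicitly by noting that the arguments of Proposition \ref{inv} (via the discrete existence theorem) and Theorem \ref{Th1} transcribe to the discrete setting with Ces\`aro sums in place of time averages. Your reduction, the Fatou step, and the splitting construction with $\supp\nu_i^{k}\subset B(z,\delta)$ are exactly the intended transcription, and you correctly observe that only equicontinuity at $z$ is used.
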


Finally, let us point out that if $(P_{t})_{t\geq 0}$ is an asymptotically
stable Feller semigroup with the e-property for bounded continuous
functions, then it converges weakly to $\mu _{\ast },$ uniformly on compact
sets.

\begin{theorem}
\label{uniform}Let $K$ be a compact subset of a metric space $X$ and $%
(P_{t})_{t\geq 0}$ an asymptotically stable Feller semigroup such that for
any $\varphi \in C_{b}(X)$ the family $(P_{t}\varphi )_{t\geq 0}$ is
equicontinuous at every point $x$ of $X$. Then there exists $\mu _{\ast }\in
\mathcal{M}_{1}$ such that
\begin{equation*}
\lim_{t\rightarrow \infty }\sup_{x\in K}\left\vert P_{t}\varphi
(x)-\left\langle \varphi ,\mu _{\ast }\right\rangle \right\vert =0
\end{equation*}%
for every $\varphi \in C_{b}(X).$ If a metric space $X$ is complete and $%
P_{t}^{\ast }\delta _{x}$ is tight for every $t\geq 0$ and $x\in K$, then
the e-property for bounded continuous functions may be relaxed to the
(plain) e-property.
\end{theorem}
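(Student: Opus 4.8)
The plan is to combine three ingredients in the classical pattern that upgrades pointwise convergence of an equicontinuous family on a compact set to uniform convergence: pointwise convergence coming from asymptotic stability, equicontinuity coming from the e-property, and compactness of $K$.

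First I would record the pointwise statement. Since $(P_{t})_{t\geq 0}$ is asymptotically stable, $P_{t}^{\ast }\delta _{x}$ converges weakly to $\mu _{\ast }$ for each $x\in X$; as $P_{t}\varphi (x)=\langle \varphi ,P_{t}^{\ast }\delta _{x}\rangle $, this gives $P_{t}\varphi (x)\to \langle \varphi ,\mu _{\ast }\rangle $ for every $\varphi \in C_{b}(X)$ and every fixed $x$. The e-property for bounded continuous functions says precisely that $(P_{t}\varphi )_{t\geq 0}$ is equicontinuous at each point of $K$. I would then run the finite-subcover argument: given $\eta >0$, choose for each $x\in K$ a radius $\delta _{x}$ with $\sup_{t\geq 0}\lvert P_{t}\varphi (y)-P_{t}\varphi (x)\rvert <\eta $ on $B(x,\delta _{x})$, extract a finite subcover $B(x_{1},\delta _{x_{1}}),\dots ,B(x_{n},\delta _{x_{n}})$ of $K$, and use pointwise convergence at the finitely many centres $x_{i}$ to find a single time $T$ after which $\lvert P_{t}\varphi (x_{i})-\langle \varphi ,\mu _{\ast }\rangle \rvert <\eta $ for all $i$. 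The triangle inequality then yields $\sup_{x\in K}\lvert P_{t}\varphi (x)-\langle \varphi ,\mu _{\ast }\rangle \rvert <2\eta $ for $t\geq T$, proving the first assertion.

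For the second assertion the same argument applies verbatim to every $\varphi \in \mathrm{Lip}_{b}(X)$, since the plain e-property already supplies equicontinuity for bounded Lipschitz functions; thus $\sup_{x\in K}\lvert P_{t}\psi (x)-\langle \psi ,\mu _{\ast }\rangle \rvert \to 0$ for all $\psi \in \mathrm{Lip}_{b}(X)$. The remaining task is to pass from $\mathrm{Lip}_{b}(X)$ to $C_{b}(X)$, and here I would argue by contradiction: if the conclusion failed for some $\varphi \in C_{b}(X)$, there would be $\eta >0$, times $t_{n}\to \infty $ and points $x_{n}\in K$ with $\lvert P_{t_{n}}\varphi (x_{n})-\langle \varphi ,\mu _{\ast }\rangle \rvert \geq \eta $; by compactness I may assume $x_{n}\to x_{0}\in K$. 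Writing $\mu _{n}=P_{t_{n}}^{\ast }\delta _{x_{n}}$, the equicontinuity at $x_{0}$ combined with the pointwise convergence at $x_{0}$ gives $\langle \psi ,\mu _{n}\rangle =P_{t_{n}}\psi (x_{n})\to \langle \psi ,\mu _{\ast }\rangle $ for every $\psi \in \mathrm{Lip}_{b}(X)$.

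The hard part will be upgrading this convergence against Lipschitz test functions to genuine weak convergence, and this is exactly where completeness and the tightness hypothesis enter. Each $\mu _{n}=P_{t_{n}}^{\ast }\delta _{x_{n}}$ is tight by assumption, so by the same mechanism invoked in Remark \ref{remark}, namely \cite[Corollary 8.6.3]{Bo}, the Cauchyness of $\langle \psi ,\mu _{n}\rangle $ for every bounded Lipschitz $\psi $ forces $(\mu _{n})$ to be uniformly tight. Prokhorov's theorem then makes $(\mu _{n})$ relatively compact in the weak topology; any weak limit point $\nu $ agrees with $\mu _{\ast }$ on $\mathrm{Lip}_{b}(X)$, and since bounded Lipschitz functions are measure-determining on a metric space, $\nu =\mu _{\ast }$. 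Hence $\mu _{n}\to \mu _{\ast }$ weakly, so $P_{t_{n}}\varphi (x_{n})=\langle \varphi ,\mu _{n}\rangle \to \langle \varphi ,\mu _{\ast }\rangle $, contradicting the choice of $t_{n},x_{n}$. I expect the only delicate point to be the verification that the cited completeness-plus-tightness result indeed yields uniform tightness from mere Cauchyness against Lipschitz functions; once that is granted, the Prokhorov and measure-determination steps are routine.
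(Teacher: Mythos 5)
Your proof of the first assertion is exactly the paper's: pointwise convergence from asymptotic stability, equicontinuity from the e-property, a finite subcover of $K$, and the triangle inequality. For the second assertion you take a genuinely different route. The paper first proves that the whole family $\{P_{t}^{\ast }\delta _{x}:t\geq 0,\ x\in K\}$ is uniformly tight (via \cite[Theorem 8.6.2]{Bo}, applied to sequences $P_{t_{n}}^{\ast }\delta _{x_{n}}$ shown to converge weakly to $\mu _{\ast }$ through Lipschitz test functions), and then uses a common compact set $K_{\varepsilon }$ together with a Kirszbraun--McShane--Whitney extension of a Lipschitz approximant of $\varphi $ to conclude that $(P_{t}\varphi )_{t\geq 0}$ is in fact equicontinuous for every $\varphi \in C_{b}(X)$; the problem is thereby reduced to the first part. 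You instead argue by contradiction along a single sequence $\mu _{n}=P_{t_{n}}^{\ast }\delta _{x_{n}}$ with $t_{n}\rightarrow \infty $ and $x_{n}\rightarrow x_{0}$, obtain Cauchyness of $\langle \psi ,\mu _{n}\rangle $ for $\psi \in \mathrm{Lip}_{b}(X)$, invoke \cite[Corollary 8.6.3]{Bo} for uniform tightness, and finish with Prokhorov plus the fact that $\mathrm{Lip}_{b}(X)$ is measure-determining. Both arguments hinge on the same Bogachev-type principle (completeness plus tightness plus Lipschitz-Cauchyness yields uniform tightness), and both are correct; yours is slightly more economical since it avoids the extension theorem and the uniform approximation of $P_{t}\varphi $ by $P_{t}\bar{\psi }$, while the paper's version buys a stronger intermediate fact --- that under these hypotheses the plain e-property upgrades to the e-property for all of $C_{b}(X)$ --- which the authors reuse later (in the iterated function system application). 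A minor point in your favour: because your sequence automatically satisfies $t_{n}\rightarrow \infty $, you sidestep the need to handle bounded time sequences when establishing weak convergence, a case the paper's uniform-tightness argument passes over silently.
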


\begin{proof}
Fix $\varphi \in C_{b}(X).$ By assumption, there exists $\mu _{\ast }\in
\mathcal{M}_{1}$ such that
\begin{equation*}
\lim_{t\rightarrow \infty }\left\langle \varphi ,P_{t}^{\ast }\delta
_{x}\right\rangle =\lim_{t\rightarrow \infty }P_{t}\varphi (x)=\left\langle
\varphi ,\mu _{\ast }\right\rangle
\end{equation*}%
for every $x\in X.$ Fix $\varepsilon >0.$ Since $(P_{t}\varphi )_{t\geq 0}$
is equicontinuous at a point $x\in X$, there exists $\delta _{x}\in
(0,\varepsilon )$ such that%
\begin{equation*}
\left\vert P_{t}\varphi (x)-P_{t}\varphi (y)\right\vert <\varepsilon
\end{equation*}%
for $t\in \lbrack 0,\infty )$ and $y\in B(x,\delta _{x}).$ Since $K$ is
compact, there exist $x_{1},...,x_{n}\in K$ such that for every $y\in K$
there exists $1\leq i\leq n$ such that $y\in B(x_{i},\delta _{x_{i}})$ and
hence $\left\vert P_{t}\varphi (x_{i})-P_{t}\varphi (y)\right\vert
<\varepsilon .$ Thus
\begin{equation*}
\limsup_{t\rightarrow \infty }\sup_{y\in K}\left\vert P_{t}\varphi
(y)-\left\langle \varphi ,\mu _{\ast }\right\rangle \right\vert \leq
\varepsilon +\lim_{t\rightarrow \infty }\sup_{1\leq i\leq n}\left\vert
P_{t}\varphi (x_{i})-\left\langle \varphi ,\mu _{\ast }\right\rangle
\right\vert \leq \varepsilon
\end{equation*}%
and letting $\varepsilon \rightarrow 0$ proves the first half of the theorem.

Now suppose that $X$ is complete and each $P_{t}^{\ast }\delta _{x}$ is
tight. We show that the family $\{P_{t}^{\ast }\delta _{x}:t\geq 0,x\in K\}$
is uniformly tight. To this end, choose a sequence $\{P_{t_{n}}^{\ast
}\delta _{x_{n}}\}_{n\in \mathbb{N}},$ $x_{n}\in K$ and let a subsequence $%
(x_{n_{k}})$ tends to an $x_{0}\in K$ Since $(P_{t})_{t\geq 0}$ is an
asymptotically stable semigroup with the e-property,
\begin{equation*}
\lim_{k\rightarrow \infty }\left\langle \psi ,P_{t_{n_{k}}}^{\ast }\delta
_{x_{n_{k}}}\right\rangle =\lim_{k\rightarrow \infty }P_{t_{n_{k}}}\psi
(x_{n_{k}})=\lim_{k\rightarrow \infty }P_{t_{n_{k}}}\psi
(x_{0})=\left\langle \psi ,\mu _{\ast }\right\rangle
\end{equation*}%
for every $\psi \in \mathrm{Lip}_{b}(X).$ Since $\mathrm{Lip}_{b}(X)$ is
convergence determining (see, e.g., \cite[Theorem 3.3.1]{EtKu}), $%
P_{t_{n_{k}}}^{\ast }\delta _{x_{n_{k}}}$ is weakly convergent to $\mu
_{\ast }.$ Hence $\{P_{t}^{\ast }\delta _{x}:t\geq 0,x\in K\}$ is uniformly
tight (see, e.g., \cite[Theorem 8.6.2]{Bo}). Thus, for every $\varepsilon >0$
there exists a compact set $K_{\varepsilon }$ such that $P_{t}^{\ast }\delta
_{x}(X\setminus K_{\varepsilon })<\varepsilon $ for every $t\geq 0$ and $%
x\in K.$ Fix $\varepsilon >0$ and $\varphi \in C_{b}(X).$ Since $\mathrm{Lip}%
(K_{\varepsilon })$ is dense in $C(K)$ for $\left\Vert \cdot \right\Vert
_{\infty },$ there exists $\psi \in \mathrm{Lip}(K_{\varepsilon })$ such
that $\left\vert \varphi (x)-\psi (x)\right\vert <\varepsilon $ for $x\in
K_{\varepsilon }.$ By the Kirszbraun-McShane-Whitney theorem (see, e.g.,
\cite[Prop. 11.2.3]{Du}), we can extend it to $\bar{\psi}\in \mathrm{Lip}%
_{b}(X).$ Thus%
\begin{align*}
& \left\vert P_{t}\varphi (x)-P_{t}\bar{\psi}(x)\right\vert =\left\vert
\left\langle \varphi ,P_{t}^{\ast }\delta _{x}\right\rangle -\left\langle
\bar{\psi},P_{t}^{\ast }\delta _{x}\right\rangle \right\vert \\
\leq & \,\int_{X\setminus K_{\varepsilon }}\left\vert \varphi (y)-\bar{\psi}%
(y)\right\vert P_{t}^{\ast }\delta _{x}(dy)+\int_{K_{\varepsilon
}}\left\vert \varphi (y)-\bar{\psi}(y)\right\vert P_{t}^{\ast }\delta
_{x}(dy) \\
\leq & \ \varepsilon (\left\Vert \varphi -\bar{\psi}\right\Vert +1)
\end{align*}%
for every $x\in X$ and $t\geq 0.$ From the e-property, $(P_{t}\bar{\psi}%
)_{t\geq 0}$ is equicontiuous at every point $x$ of $X$ and by the above
argument, $(P_{t}\varphi )_{t\geq 0}$ is equicontiuous at every point, too.
We can now repeat the arguments from the first part of the proof.
\end{proof}

Since the above result is also valid in the discrete case, we obtain the
following generalization of \cite[Theorem 3.3]{CzHo}.

\begin{theorem}
\label{undiscrete}Let $K$ be a compact subset of a metric space. Under the
assumptions of Theorem \ref{stab_dis},%
\begin{equation*}
\lim_{n\rightarrow \infty }\sup_{%
%TCIMACRO{\TeXButton{supp}{\supp}}%
%BeginExpansion
\supp%
%EndExpansion
\mu \subset K}\left\vert \left\langle \varphi ,\int_{X}\pi ^{n}(x,\cdot )\mu
(dx)\right\rangle -\left\langle \varphi ,\mu _{\ast }\right\rangle
\right\vert =0.
\end{equation*}%
In particular, $\pi ^{n}(x,\cdot )$ converges weakly to $\mu _{\ast },$
uniformly on compact subsets of $X.$
\end{theorem}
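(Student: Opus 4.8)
The plan is to reduce Theorem~\ref{undiscrete} to Theorem~\ref{uniform} by interpreting the discrete-time Markov chain with transition function $\pi$ as the time-one iteration of a Markov operator, and then observing that a supremum over measures supported on a compact set $K$ is controlled by the corresponding supremum over point masses $\delta_x$, $x\in K$. First I would let $P$ denote the Markov operator associated with $\pi$, so that $P\varphi(x)=\int_X \varphi(y)\,\pi(x,dy)$ and the $n$-th iterate satisfies $P^n\varphi(x)=\int_X \varphi(y)\,\pi^n(x,dy)$, with dual $(P^*)^n\delta_x=\pi^n(x,\cdot)$. Under the assumptions of Theorem~\ref{stab_dis}, the chain is asymptotically stable with a unique invariant measure $\mu_\ast$, and the equicontinuity hypothesis on $\{\int_X\varphi(y)\,\pi^n(z,dy):n\in\mathbb{N}\}$ is exactly the discrete analogue of the equicontinuity of $(P_t\varphi)_{t\ge0}$ at every point. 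Thus the discrete counterpart of the first half of Theorem~\ref{uniform} yields
\begin{equation*}
\lim_{n\to\infty}\sup_{x\in K}\bigl|P^n\varphi(x)-\langle\varphi,\mu_\ast\rangle\bigr|=0
\end{equation*}
for every $\varphi\in C_b(X)$ and every compact $K\subset X$.

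Next I would pass from this uniform-on-$K$ convergence of $P^n\varphi$ to the statement about measures. The key identity is
\begin{equation*}
\Bigl\langle\varphi,\int_X\pi^n(x,\cdot)\,\mu(dx)\Bigr\rangle=\int_X P^n\varphi(x)\,\mu(dx),
\end{equation*}
valid for every $\mu\in\mathcal{M}_1$, which follows by Fubini's theorem. Consequently, for any $\mu$ with $\supp\mu\subset K$,
\begin{equation*}
\Bigl|\Bigl\langle\varphi,\int_X\pi^n(x,\cdot)\,\mu(dx)\Bigr\rangle-\langle\varphi,\mu_\ast\rangle\Bigr|
=\Bigl|\int_X\bigl(P^n\varphi(x)-\langle\varphi,\mu_\ast\rangle\bigr)\,\mu(dx)\Bigr|
\le\sup_{x\in K}\bigl|P^n\varphi(x)-\langle\varphi,\mu_\ast\rangle\bigr|,
\end{equation*}
where the last inequality uses that $\mu$ is a probability measure concentrated on $K$. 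Taking the supremum over all such $\mu$ and then letting $n\to\infty$, the right-hand side tends to $0$ by the first step, which establishes the displayed limit.

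For the final assertion I would specialize to point masses: choosing $\mu=\delta_x$ with $x$ ranging over a compact set gives that $\pi^n(x,\cdot)=(P^*)^n\delta_x$ converges weakly to $\mu_\ast$, uniformly for $x$ in that compact set, since uniform convergence of $\langle\varphi,\cdot\rangle$ against each fixed $\varphi\in C_b(X)$ is precisely uniform weak convergence. The only point requiring care, and the step I expect to be the main obstacle, is justifying the discrete version of Theorem~\ref{uniform}—in particular the compactness argument producing a finite cover $B(x_i,\delta_{x_i})$ of $K$ from the pointwise equicontinuity, together with the completeness/tightness dichotomy allowing one to weaken the hypothesis to the plain e-property. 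This is handled exactly as in the proof of Theorem~\ref{uniform}, mutatis mutandis, replacing the continuous-time semigroup $(P_t)$ by its discrete iterates $(P^n)$; no new difficulty arises beyond transcribing that argument, since the convergence-determining property of $\mathrm{Lip}_b(X)$ and the Kirszbraun-McShane-Whitney extension are insensitive to the discrete/continuous distinction.
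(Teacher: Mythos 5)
Your proposal is correct and follows exactly the route the paper intends: the paper states this theorem as an immediate consequence of the discrete-time version of Theorem~\ref{uniform}, and your reduction via $\bigl\langle\varphi,\int_X\pi^n(x,\cdot)\,\mu(dx)\bigr\rangle=\int_X P^n\varphi(x)\,\mu(dx)$ together with the bound by $\sup_{x\in K}\left\vert P^n\varphi(x)-\left\langle\varphi,\mu_\ast\right\rangle\right\vert$ for probability measures supported in $K$ is precisely the (omitted) averaging step. The transcription of the compactness/equicontinuity argument of Theorem~\ref{uniform} to discrete time is indeed routine, as you note.
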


Notice that if $X$ is complete and each $\pi ^{n}(x,\cdot )$ is tight, it
follows from Remark \ref{remark} and the second part of Theorem \ref{uniform}
that it is sufficient to assume that the family $\left\{ \int_{X}\varphi
(y)\pi ^{n}(z,dy):n\in \mathbb{N}\right\} $ is equicontinuous for every $%
\varphi \in \mathrm{Lip}_{b}(X)$ rather than for every bounded continuous $%
\varphi .$ It enables us to extend Theorem 6.1 in \cite{CzHo}. Let $X$ be a
complete metric space and consider on $X$ an iterated function system $%
(w,p)_{N}=(w_{1},...,w_{N},p_{1},...,p_{N})$ as in Example \ref{1}. Let $%
\mathcal{R}$ denote the collection of all couples $(r,\omega )$ of
nondecreasing and continuous at $0$ functions from $[0,\infty )$ into $%
[0,\infty )$ with $r(0)=\omega (0)=0$ such that the series $%
\sum\nolimits_{n=1}^{\infty }\omega (r^{n}(t))$ is convergent and $r(t)<t$
for some $T>0$ and every $0<t<T.$

\begin{theorem}
Let the iterated function system $(w,p)_{N}$ on a complete metric space $X$
satisfy%
\begin{align*}
\sum_{i=1}^{N}p_{i}(x)\rho (w_{i}(x),w_{i}(y))& \leq r(\rho (x,y)), \\
\sum_{i=1}^{N}\left\vert p_{i}(x)-p_{i}(y)\right\vert & \leq \omega (\rho
(x,y))
\end{align*}%
for all $x,y\in X$ and a couple of concave functions $(r,\omega )\in
\mathcal{R}$. Furthermore, suppose that there exist $\hat{x}\in X$ with $%
\min_{1\leq i\leq N}p_{i}(\hat{x})>0$ and $k\in \{1,...,N\}$ such that $%
w_{k} $ is a contraction and $\inf_{x\in X}p_{k}(x)>0.$ Then the transition
operator%
\begin{equation*}
Pf(x)=\sum_{i=1}^{N}f(w_{i}(x))p_{i}(x)
\end{equation*}%
corresponding to $(w,p)_{N}$ is asymptotically stable and $((P^{\ast
})^{n}\delta _{x}(\cdot )):=(P^{n}\mathbf{1}_{(\cdot )}(x))$ converges
weakly to a unique $\mu _{\ast }\in \mathcal{M}_{1}$ uniformly with respect
to $x$ on compact subsets of $X.$
\end{theorem}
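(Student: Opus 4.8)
The plan is to realize $(w,p)_N$ as a discrete-time Markov chain with transition function $\pi(x,\cdot)=\sum_{i=1}^{N}p_i(x)\delta_{w_i(x)}$, so that $Pf(x)=\int_X f\,d\pi(x,\cdot)$ and $P^n\mathbf 1_A(x)=\pi^n(x,A)$, and then to verify the two hypotheses of Theorem \ref{stab_dis}; the uniform convergence on compact sets will follow at once from Theorem \ref{undiscrete}. Each $\pi^n(x,\cdot)$ is a probability measure supported on at most $N^n$ points, hence tight, so by the remark preceding the statement (which combines Remark \ref{remark} with the second part of Theorem \ref{uniform}) it suffices to check the equicontinuity of $(P^n\varphi)_{n\in\mathbb N}$ for $\varphi\in\mathrm{Lip}_b(X)$ only, together with the uniform lower bound demanded by Theorem \ref{stab_dis}.

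For the e-property I would fix $\varphi\in\mathrm{Lip}_b(X)$ with Lipschitz constant $L$ and $M=\|\varphi\|_\infty$, and prove by induction that $|P^n\varphi(x)-P^n\varphi(y)|\le\phi_n(\rho(x,y))$, where $\phi_0(t)=Lt$ and $\phi_n(t)=\phi_{n-1}(r(t))+M\omega(t)$. Writing $P^n\varphi=P(P^{n-1}\varphi)$ and splitting
\[ P^n\varphi(x)-P^n\varphi(y)=\sum_i p_i(x)\bigl(P^{n-1}\varphi(w_i(x))-P^{n-1}\varphi(w_i(y))\bigr)+\sum_i P^{n-1}\varphi(w_i(y))\bigl(p_i(x)-p_i(y)\bigr), \]
the first sum is bounded, via the inductive concavity of $\phi_{n-1}$, Jensen's inequality and the first hypothesis, by $\phi_{n-1}\bigl(\sum_i p_i(x)\rho(w_i(x),w_i(y))\bigr)\le\phi_{n-1}(r(\rho(x,y)))$, while the second is bounded by $M\omega(\rho(x,y))$ through the second hypothesis and $\|P^{n-1}\varphi\|_\infty\le M$. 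Since $r,\omega$ are concave and nondecreasing, each $\phi_n$ is again concave and nondecreasing, and unrolling the recursion gives $\phi_n(t)=Lr^n(t)+M\sum_{j=0}^{n-1}\omega(r^j(t))$, whence $\phi_n(t)\le Lt+M\sum_{j=0}^{\infty}\omega(r^j(t))$ for $0<t<T$ and all $n$. Because $(r,\omega)\in\mathcal R$, this majorant tends to $0$ as $t\to0^+$ (the tail of the series is controlled uniformly in $t$ by its convergence and the monotonicity of $\omega\circ r^j$, the finitely many leading terms by continuity at $0$), so $(P^n\varphi)_n$ is equicontinuous at every point.

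The main work, and the step I expect to be the real obstacle, is the uniform lower bound $\inf_{x\in X}\liminf_{n\to\infty}\pi^n(x,B(z,\varepsilon))\ge\alpha(\varepsilon)>0$. I would take $z$ to be the fixed point of the contraction $w_k$ (it exists since $X$ is complete), with ratio $\lambda<1$, and set $p_\ast=\inf_x p_k(x)>0$. With $V(x)=\rho(x,z)$, the first hypothesis applied with $y=z$ gives the drift inequality $PV(x)\le r(V(x))+C$, where $C=\max_i\rho(w_i(z),z)$. Concavity of $r$ together with $r(t)<t$ on $(0,T)$ yields an affine majorant $r(s)\le\lambda_0 s+b$ with $\lambda_0<1$, so that $P^nV(x)\le\lambda_0^n V(x)+R_1$ for some constant $R_1$ independent of $x$; Markov's inequality then gives $\liminf_n\pi^n(x,B(z,M))\ge1-R_1/M$ uniformly in $x$, which exceeds $1/2$ once $M$ is fixed large. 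The contraction finally concentrates this mass: for $u\in B(z,M)$ the path applying $w_k$ at each of $m$ consecutive steps has probability at least $p_\ast^m$ and ends in $B(z,\varepsilon)$ as soon as $\lambda^m M<\varepsilon$, so by the Chapman--Kolmogorov equation $\pi^{n+m}(x,B(z,\varepsilon))\ge p_\ast^m\,\pi^n(x,B(z,M))$. Letting $n\to\infty$ yields the bound with $\alpha(\varepsilon)=p_\ast^{m}/2$, uniform in $x$ because $M$, $\lambda$ and $m$ do not depend on the starting point. (In this argument the lower bound is driven solely by $w_k$ and $\inf_x p_k(x)>0$; the point $\hat x$ enters, as in \cite{CzHo}, only to locate the support of the invariant measure.)

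With both hypotheses in force, Theorem \ref{stab_dis} supplies a unique invariant measure $\mu_\ast$ and the weak convergence of $\int_X\pi^n(x,\cdot)\mu(dx)$ to $\mu_\ast$ for every $\mu\in\mathcal M_1$, that is, the asymptotic stability of $P$; Theorem \ref{undiscrete} then upgrades this to convergence uniform with respect to $x$ on compact subsets of $X$, as claimed. The delicate point throughout is the lower bound: one must marry the global drift estimate, which pins the chain inside a fixed large ball uniformly in the initial point, to the local contraction along $w_k$, which forces a fixed proportion of the mass into the small ball $B(z,\varepsilon)$; tightness of the finitely supported measures $\pi^n(x,\cdot)$ is precisely what allows the verification of equicontinuity to be carried out for Lipschitz test functions only.
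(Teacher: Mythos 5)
Your proposal is correct and follows essentially the same route as the paper's proof: verify the Lipschitz equicontinuity and the uniform lower bound required by Theorem \ref{stab_dis}, use the tightness of the finitely supported measures $\pi^{n}(x,\cdot)$ together with Remark \ref{remark} and the second part of Theorem \ref{uniform} to pass to bounded continuous test functions, and conclude with Theorem \ref{undiscrete}. The only difference is that the paper delegates the two technical verifications to \cite{CzHo}, whereas you carry them out explicitly (and correctly) via the concave recursion $\phi_{n}(t)=\phi_{n-1}(r(t))+M\omega(t)$ and the drift-plus-contraction argument around the fixed point of $w_{k}$.
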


\begin{proof}
We can prove analogously to Theorem 5.1 in \cite{CzHo} that the family $%
\left\{ P^{n}\varphi :n\in \mathbb{N}\right\} $ is equicontinuous at every
point $x\in X$ for any $\varphi \in \mathrm{Lip}_{b}(X).$ Now, the arguments
of \cite[Theorem 6.1]{CzHo}, Theorem \ref{stab_dis} and Remark \ref{remark}
show that $P$ is asymptotically stable. Since $X$ is complete and $P^{\ast }$
transforms tight measures into tight measures, analysis similar to that in
the proof of Theorem \ref{uniform} shows that $\left\{ P^{n}\varphi :n\in
\mathbb{N}\right\} $ is also equicontinuous for any $\varphi \in C_{b}(X).$
We can now apply Theorem \ref{undiscrete}.
\end{proof}

\begin{remark}
Recall \cite{Sz3} that a Markov operator $P^{\ast }:\mathcal{M}%
_{1}\rightarrow \mathcal{M}_{1}$ is concentrating if for every $\varepsilon
>0$ there exists a Borel set $C\subset X$ with $\diam C<\varepsilon $ such
that
\begin{equation*}
\inf_{\mu \in \mathcal{M}_{1}}\liminf_{n\rightarrow \infty }(P^{\ast
})^{n}(C)>0.
\end{equation*}%
It was proved in \cite[Theorem 3.3]{Sz1} that a Markov operator defined on a
Polish space, which is concentrating and nonexpansive in the Fortet-Mourier
norm, is asymptotically stable. It is not clear whether a similar result is
also valid in any metric space.
\end{remark}

\begin{ack}
The authors are grateful to the referee for several helpful improvements.
\end{ack}

\end{document}